\documentclass{article}
\usepackage{amsfonts}
\usepackage{amsmath}
\usepackage{amssymb}
\usepackage{amsthm}
\usepackage{graphicx}
\RequirePackage{doi}
\usepackage{hyperref}
\usepackage{xcolor}
\usepackage{comment}
\usepackage[style=alphabetic,backend=biber]{biblatex}
\newtheorem{theorem}{Theorem}

\newtheorem{prop}{Theorem}
\newtheorem{proposition}[prop]{Proposition}

\newtheorem{cor}{Theorem}
\newtheorem{corollary}[cor]{Corollary}

\newtheorem{thm}{Theorem}
\newtheorem{definition}[thm]{Definition}

\newtheorem{ex}{Theorem}
\newtheorem{example}[ex]{Example}

\newtheorem{lem}{Theorem}
\newtheorem{lemma}[lem]{Lemma}

\newtheorem{rem}{Theorem}
\newtheorem{remark}[rem]{Remark}

\providecommand{\keywords}[1]
{
  \small	
  {\textit{Keywords:}} #1
}

\providecommand{\msc}[1]
{
  \small	
  {\textit{MSC2020:}} #1
}

\title{Symmetric Bessmertny{\u\i} Realizations and Field Extension Problems in Characteristic 2 - A Differential Algebra Approach}

\author{Soumya Sinha Babu$\footnote{Email: ssinhababu@fit.edu}$\; and Aaron Welters$\footnote{Email: awelters@fit.edu}$\\ \\Department of Mathematics and Systems Engineering\\Florida Institute of Technology\\Melbourne, FL 32901, USA}
\date{}

\begin{document}

\maketitle

\begin{abstract}
We present a short, purely algebraic proof of the Symmetric Bessmertny{\u\i} Realization Theorem in the characteristic $2$ case recently proved in \cite{Elsinger:BRS:2026}. Symmetric Bessmertny{\u\i} realizations are Schur complements of affine linear symmetric matrix pencils, and they arise naturally as state-space representations in linear systems theory. In contrast with the algorithmic approach in \cite{Elsinger:BRS:2026}, we use differential algebra: by defining formal partial derivatives on multivariate rational functions over fields of positive characteristic and considering their corresponding field of constants, we obtain scalar criteria for symmetric and homogeneous symmetric realizability in characteristic $2$, effectively reducing the matrix-valued problem to its diagonal entries. As a consequence, we prove a new theorem on the field extension problem for symmetric and homogeneous symmetric Bessmertny{\u\i} realizations. Finally, in the scalar case, we identify realizable rational functions with vector spaces over appropriate fields of constants and quantify the abundance of counterexamples in characteristic $2$.
\end{abstract}

\keywords{symmetric Bessmertny{\u\i} realizations; multivariate rational matrix functions; linear matrix pencils; characteristic $2$; formal partial derivatives; field of constants; field extension problem}

\msc{Primary 15A54; Secondary 15A22, 15B57, 12H05, 13N15, 16W10, 93B25, 93C35, 93B15, 12F20}

\section{Introduction}
Let $\mathbb{F}$ be a field with characteristic denoted by $\operatorname{char}(\mathbb{F})$ and set $z=(z_1,\ldots,z_n)$, where $z_1,\ldots,z_n$ are $n$ indeterminates. Then $\mathbb{F}[z]$ will denote the integral domain of all polynomials in $z_1,\ldots,z_n$ with coefficients in $\mathbb{F}$. Next, we will denote the field of rational functions as $\mathbb{F}(z)$ and the subset of all homogeneous rational functions of degree $d$ as $\mathbb{F}(z)_d$.

The set of all $m\times k$ matrices with entries in $\mathbb{F}$ will be denoted by $\mathbb{F}^{m\times k}$, and if $A=[a_{ij}]\in \mathbb{F}^{m\times k}$ is a matrix then $A^T=[a_{ji}]\in \mathbb{F}^{k\times m}$ will denote its transpose \{and, similarly for $\mathbb{F}[z]$, $\mathbb{F}(z)$, or $\mathbb{F}(z)_d$\}. We now introduce the following important definitions.

\begin{definition}\label{def:LinearPencils}
An (affine) linear matrix pencil (LP) is an element $A(z)\in\mathbb{F}[z]^{m\times m}$ of the form \[A(z) = A_0 + z_1A_1 + \cdots + z_nA_n,\] where $A_0, A_1, \ldots, A_n\in \mathbb{F}^{m\times m}$. If $A_j$ is a symmetric matrix for each $j=0,1,\ldots, n$, then $A(z)$ is called a linear symmetric matrix pencil (sLP). If $A_0=0$, then $A(z)$ is called a linear homogeneous matrix pencil (hLP) and if, in addition, it is a sLP then it is called a linear homogeneous symmetric matrix pencil (hsLP).
\end{definition}

\begin{definition}\label{def:BRDefsAndDefsForFieldExtIssue} 
An element $F(z)\in\mathbb{F}(z)^{k\times k}$ is \textbf{Bessmertny\u{\i} realizable}, i.e., has a \textbf{Bessmertny\u{\i} realization {\upshape{(}}BR{\upshape{)}}} [over the field $\mathbb{F}(z)$], if there exists a linear matrix pencil $A(z)=[A_{ij}(z)]_{i,j=1,2}\in \mathbb{F}[z]^{m\times m}$ partitioned into a $2\times 2$ block matrix form such that $F(z)$ is the Schur complement of $A(z)$ with respect to $A_{22}(z)$, that is, 
\begin{align*}
    F(z)=A(z)/A_{22}(z)=A_{11}(z)-A_{12}(z)[A_{22}(z)]^{-1}A_{21}(z),
\end{align*}
in which case we call $A(z)$ the \textbf{Bessmertny\u{\i} realizer} of $F(z)$.
If $A(z)$ is a sLP then $F(z)$ is said to have a \textbf{symmetric Bessmertny\u{\i} realization {\upshape{(}}SBR{\upshape{)}}}. If $A(z)$ is a hLP then $F(z)$ is said to have a \textbf{homogeneous Bessmertny\u{\i} realization {\upshape{(}}hBR{\upshape{)}}}. If $A(z)$ is a hsLP then $F(z)$ is said to have a \textbf{homogeneous symmetric Bessmertny\u{\i} realization {\upshape{(}}hSBR{\upshape{)}}}. 
\end{definition}

In \cite{Elsinger:BRS:2026}, the following theorem was proved; we will call it the Bessmertny\u{\i}-Elsinger-Orzel-Welters Theorem, or the \textit{Bessmertny\u{\i} Realization Theorem} for short.
\begin{theorem}[Elsinger-Orzel–Welters (2026)]\label{thm:BessmertnyiRealizabilityThm}
Let $F(z)\in\mathbb{F}(z)^{k\times k}$. Then:
\begin{itemize}
\item[$($a$)$] $F(z)$ has a BR.
\item[$($b$)$] $F(z)$ has a hBR if and only if $F(z)\in \mathbb{F}(z)_1^{k\times k}$. 
\item[$($c$)$] $F(z)$ has a SBR if and only if $F(z)^T=F(z)$ and any one of the following three conditions holds: {\upshape{(}}i{\upshape{)}} $n=1;$ {\upshape{(}}ii{\upshape{)}} $\operatorname{char}(\mathbb{F})\not=2;$ {\upshape{(}}iii{\upshape{)}} $n\geq 2$, $\operatorname{char}(\mathbb{F})=2,$ and the diagonal entries of $F(z)$ are in the subspace
\begin{gather}
    \operatorname{span}\{q_0^2+z_1q_1^2+\cdots+z_nq_n^2:q_i\in \mathbb{F}(z)\},\label{SBRSubspaceChar2}
\end{gather}
where the span is over the field $\mathbb{F}$.
\item[$($d$)$] $F(z)$ has a hSBR if and only if $F(z)\in\mathbb{F}(z)_1^{k\times k}$, $F(z)^T=F(z)$, and any one of the following three conditions holds: {\upshape{(}}i{\upshape{)}} $n=1,2;$ {\upshape{(}}ii{\upshape{)}} $\operatorname{char}(\mathbb{F})\not=2;$ {\upshape{(}}iii{\upshape{)}} $n\geq 3$, $\operatorname{char}(\mathbb{F})=2,$ and the diagonal entries of $F(z)$ are in the subspace
\begin{gather}
    \operatorname{span}\{z_1q_1^2+\cdots+z_nq_n^2:q_i\in \mathbb{F}(z)_0\},\label{hSBRSubspaceChar2}
\end{gather}
where the span is over the field $\mathbb{F}$.
\end{itemize}
\end{theorem}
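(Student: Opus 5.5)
We prove the four parts in order. Parts (a) and (b) come first and are used as black boxes for (c) and (d). The symmetric parts then reduce to the diagonal entries, using closure of realizable functions under sums and under the constructions below.

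\textbf{Parts (a) and (b).} We use that the class of realizable matrices is closed under three operations:
\begin{itemize}
\item sums, via a block-diagonal direct sum of the $A_{22}$ blocks;
\item products $F G$, via the pencil $\begin{bmatrix} 0 & F_{\text{part}}\\ \cdot & \cdot\end{bmatrix}$, i.e.\ the standard Schur-complement calculus;
\item inversion, since $(A/A_{22})^{-1}$ is itself a Schur complement of an enlarged pencil: bordering $A$ by $\begin{bmatrix}0 & I\\ I & 0\end{bmatrix}$-type blocks moves $A_{11}$ into the $22$-position.
\end{itemize}
The constants and the $z_j$ are trivially realizable. Hence every polynomial, and then every rational entry, is realizable, and a matrix is assembled entrywise using $E_{ij}$ multipliers. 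This gives (a).

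For (b), note that a Schur complement of an hLP is homogeneous of degree $1$, which gives necessity. For sufficiency, write $F(z)=z_1\,G(z_2/z_1,\dots,z_n/z_1)$ by dehomogenizing, and realize $G$ in the variables $w_j=z_j/z_1$ by (a). Multiplying the whole pencil through by $z_1$ (scaling the pencil by $z_1$ scales the Schur complement by $z_1$) turns the affine pencil in the $w_j$ into a homogeneous pencil in $z$.

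\textbf{Parts (c) and (d).} Symmetry of $F$ is clearly necessary. If $\operatorname{char}\mathbb{F}\neq 2$, we symmetrize. Write $F=D+U+U^T$ with $D$ diagonal and $U$ strictly upper triangular. Realize each scalar entry $f$ symmetrically by
\[
f=\tfrac14\bigl((1+f)^2-(1-f)^2\bigr),
\]
together with the facts that $g^{-1}$ and $c\,g\,g^T$ are symmetrically realizable whenever $g$ is, by symmetric bordering tricks. The off-diagonal terms $f(E_{ij}+E_{ji})$ are handled similarly. The case $n=1$ (and $n\le 2$ in the homogeneous case) follows from (ii)-type constructions adapted so that no division by $2$ is needed. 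Here one checks that the subspace \eqref{SBRSubspaceChar2} is all of $\mathbb{F}(z)$ when $n=1$, because $\mathbb{F}(z_1)=\mathbb{F}(z_1^2)+z_1\mathbb{F}(z_1^2)$.

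In characteristic $2$, the key observation for necessity is that the diagonal of a symmetric pencil $A(z)$ with respect to any vector $v\in\mathbb{F}(z)^m$ satisfies
\[
v^TA(z)v=\sum_i A_{ii}(z)v_i^2 .
\]
This holds because the cross terms cancel in characteristic $2$. Each $A_{ii}(z)$ is affine in $z$, and since the coefficients lie in $\mathbb{F}$, the expression lies in the span \eqref{SBRSubspaceChar2}. Now write $F_{ii}=e_i^TA_{11}e_i-e_i^TA_{12}A_{22}^{-1}A_{21}e_i$. With $w=A_{22}^{-1}A_{21}e_i$ this becomes $x^TAx$ for $x=(e_i,-w)$, so $F_{ii}=x^TAx\in$ \eqref{SBRSubspaceChar2}.

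For sufficiency, the off-diagonal entries $f(E_{ij}+E_{ji})$ are always symmetrically realizable. They are realized by the pencil
\[
\begin{bmatrix}0&P\\P^T&Q\end{bmatrix}
\]
built from a (nonsymmetric) realization of $f$ from (a). The diagonal entries need elements of the form $q_0^2+\sum_j z_jq_j^2$. Each such term is $x^TA x$ with $A=\operatorname{diag}(1,z_1,\dots,z_n)$ and $x=(q_0,\dots,q_n)$. It is realized by combining realizations of the $q_j$ via $q_j^2 z_j$ as a Schur complement of a symmetric pencil, roughly
\[
\begin{bmatrix}0 & b^T\\ b & C\end{bmatrix}
\]
with $C$ symmetric and $b$ chosen so that the complement equals $q^TDq$.

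The homogeneous case (d) follows by the same argument with degree-$0$ $q_j$ and no constant term. The extra case $n=2$ uses $\mathbb{F}(z)_0=\mathbb{F}(t)$ with $t=z_2/z_1$, together with the case $n=1$ of (c) after dehomogenizing.

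The main obstacle is the sufficiency construction in characteristic $2$. One must produce a \emph{symmetric} pencil realizing $q^2z_j$ for an arbitrary rational $q$, and the usual factor-$\tfrac12$ symmetrization is unavailable there. I would first try realizing $q^{-1}$-type symmetric objects via continued Schur complements. Failing that, I would try to prove closure of the symmetrically realizable diagonal class under $g\mapsto h^TGh$ for realizable vectors $h$.
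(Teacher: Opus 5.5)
\textbf{Verdict: the necessity half is correct and takes a different route from the paper; the sufficiency half in characteristic $2$ has a genuine gap.} Parts (a) and (b) use the same Schur-complement calculus the paper cites and are fine at sketch level.

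\textbf{The gap: sufficiency in characteristic $2$.} You flag this yourself. The pencil $\begin{bmatrix}0&b^T\\ b&C\end{bmatrix}$ \emph{with $b$ chosen so that the complement equals $q^TDq$} is not a construction. The same unproved fact is also used silently elsewhere:
in (c)(ii), where you invoke that $c\,gg^T$ is symmetrically realizable; in (c)(i) for $\operatorname{char}(\mathbb{F})=2$, where you only show that \eqref{SBRSubspaceChar2} equals $\mathbb{F}(z_1)$; and in (d)(i) for $n=2$.

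Closure under norms alone cannot work. Diagonal entries of $\mathbb{F}$-combinations of norms $gg^T$ lie in $\mathbb{F}(z^2)$, and traces $g+g^T$ have zero diagonal in characteristic $2$, so $z_jq^2$ is never reached.

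The missing idea is closure of SBRs under $R\mapsto RX^{-1}R^T$, where $R=A/A_{22}$ has an arbitrary nonsymmetric BR and $X$ is an invertible symmetric pencil. This is the ampleness the paper imports from Elsinger--Orzel--Welters (operation (IV)(b), used in Theorem~\ref{thm:JordanAlgSBRs}). It has a short proof. Take the symmetric pencil
\[
M=\begin{bmatrix}0&A_{11}&A_{12}&0\\ A_{11}^T&-X&0&A_{21}^T\\ A_{12}^T&0&0&A_{22}^T\\ 0&A_{21}&A_{22}&0\end{bmatrix}.
\]
Eliminating the last two block rows and columns (that block has inverse $\begin{bmatrix}0&A_{22}^{-1}\\ A_{22}^{-T}&0\end{bmatrix}$) leaves $\begin{bmatrix}0&R\\ R^T&-X\end{bmatrix}$. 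Eliminating $-X$ then gives $RX^{-1}R^T$. By the quotient formula, this is the Schur complement of $M$ with respect to its last three block rows and columns, a block with determinant $\pm\det(A_{22})^2\det X\neq0$. Now use
\[
q^2=q\cdot 1^{-1}\cdot q,\qquad z_jq^2=(z_jq)\,z_j^{-1}\,(z_jq).
\]
Together with sums and scalar multiples, this gives all of \eqref{SBRSubspaceChar2}. For $q\in\mathbb{F}(z)_0$, the element $z_jq$ has an hBR by (b), so $M$ is an hsLP and you obtain \eqref{hSBRSubspaceChar2}.

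Your off-diagonal sketch should be made precise as the symmetrization pencil
\[
\begin{bmatrix}A_{11}+A_{11}^T&A_{12}&A_{21}^T\\ A_{12}^T&0&A_{22}^T\\ A_{21}&A_{22}&0\end{bmatrix},
\]
whose Schur complement is $G+G^T$ for $G=A/A_{22}$; take $G=fE_{ij}$. In characteristic $\neq2$ this alone gives (c)(ii) via $F=\tfrac12(F+F^T)$, with no squares needed.

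\textbf{Necessity: correct, and a different route.} The identity $F_{ii}=x^TA(z)x$ with $x=(e_i,-A_{22}^{-1}A_{21}e_i)$ holds for any pencil. Symmetry in characteristic $2$ then gives $x^TA(z)x=\sum_jA(z)_{jj}x_j^2$, which lies in \eqref{SBRSubspaceChar2} immediately. In the homogeneous case, add the remark that $A_{22}^{-1}A_{21}e_i$ has entries in $\mathbb{F}(z)_0$.

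The paper instead shows $\partial F/\partial z_i=F^2[(A^{-1})^TA_iA^{-1}]_{11}\in\mathbb{F}(z^2)$ using Albert's congruence theorem. It then recovers the span from Corollary~\ref{cor:2ndMainCor} and Lemma~\ref{lem:PartialDervZeroConnection2SBR}. Your identity could even replace the Albert step there, since that entry equals $c^TA_ic$ with $c=A^{-1}e_1$.

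Your route is shorter and needs no differential algebra. The paper's route yields the derivative criterion $\partial F/\partial z_i\in\mathbb{F}(z^2)$. That criterion is what drives the descent from $\mathbb{K}(w)$ to $\mathbb{F}(z)$ in Theorem~\ref{thm:MainResultSBRFieldExtension}, and your quadratic-form argument does not provide it.
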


The original proof of this theorem in \cite{Elsinger:BRS:2026} was algorithmic and rather long (about 26 pages) with most of the effort and pages dedicated to proving statement (c)(iii) in the case $k=1$. Thus, the main goal of our paper is to give an alternative, purely algebraic proof of this important theorem that simplifies and shortens the one from \cite{Elsinger:BRS:2026} and to also provide more clarity and insight on why the statement is true. As we shall see we can achieve this by proving the following theorem using a new approach based on differential algebra.
\begin{theorem}\label{thm:MainResultSBRChar2ScalarCase}
    Let $F(z)\in\mathbb{F}(z)^{1\times 1}$ and suppose $\operatorname{char}(\mathbb{F})=2$. Then the following statements are true: 
    \begin{itemize}
        \item[(i)] If $F(z)$ has a SBR then
\begin{gather*}
    F(z)\in\operatorname{span}\{q_0^2+z_1q_1^2+\cdots+z_nq_n^2:q_i\in \mathbb{F}(z)\},
\end{gather*}
where the span is over the field $\mathbb{F}$. 
        \item[(ii)] If $F(z)$ has a hSBR then
\begin{gather*}
    F(z)\in\operatorname{span}\{z_1q_1^2+\cdots+z_nq_n^2:q_i\in \mathbb{F}(z)_0\},
\end{gather*}
where the span is over the field $\mathbb{F}$.  
    \end{itemize}
\end{theorem}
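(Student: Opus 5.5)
Our plan rests on the differential-algebra viewpoint from the abstract. In characteristic $2$ we define formal partial derivatives $\partial_j=\partial/\partial z_j$ on $\mathbb{F}(z)$ by the usual quotient rule; these are $\mathbb{F}$-linear derivations. Since $\partial_j(q^2)=2q\,\partial_j q=0$, every square is a constant for every $\partial_j$.

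The key observation concerns the operator $D=\partial_1\partial_2\cdots\partial_n$, or better the family of mixed second derivatives $\partial_i\partial_j$ with $i\neq j$. Each element $q_0^2+\sum_i z_iq_i^2$ is killed by all $\partial_i\partial_j$ with $i\ne j$. Indeed $\partial_j(z_iq_i^2)=\delta_{ij}q_i^2$, and a second derivative of $q_i^2$ vanishes. Hence the span in (i) lies in the $\mathbb{F}$-subspace $K=\bigcap_{i\neq j}\ker(\partial_i\partial_j)$. Moreover, $K$ equals that span. To see this, write $\mathbb{F}(z)$ as a vector space over $\mathbb{F}(z)^2=\mathbb{F}^2(z_1^2,\dots,z_n^2)$ with basis the monomials $z^\alpha$, $\alpha\in\{0,1\}^n$ (after adjoining $\mathbb{F}$ suitably; working over $\mathbb{F}\cdot\mathbb{F}(z)^2$ avoids needing $\mathbb{F}$ perfect). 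Since $\partial_j$ is linear over squares and acts on $z^\alpha$ by deleting $z_j$ when $\alpha_j=1$ and giving $0$ otherwise, $K$ consists exactly of the combinations of $z^\alpha$ with $|\alpha|\le1$. Those are precisely $\mathbb{F}$-combinations of $q_0^2+\sum z_iq_i^2$.

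Given this, statement (i) reduces to checking that an SBR $F=a(z)-b(z)^TC(z)^{-1}b(z)$ satisfies $\partial_i\partial_j F=0$ for $i\neq j$. Here $a(z)$ is affine linear, $b(z)$ is an affine vector, and $C(z)=C_0+\sum z_kC_k$ is a symmetric pencil. The affine part $a$ is clearly killed. For the Schur complement part we use the derivative formulas
\begin{gather*}
\partial_i(C^{-1})=-C^{-1}C_iC^{-1},\qquad \partial_i b=b_i,
\end{gather*}
valid for any derivation. Differentiating $g=b^TC^{-1}b$ twice and collecting terms, the result groups into pairs of the form $u^TMv+v^TMu$ with $M$ symmetric. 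Each such pair equals $2u^TMv=0$ in characteristic $2$. Terms that are not paired appear as $u^TC^{-1}C_iC^{-1}C_jC^{-1}u$ together with the same expression with $i\leftrightarrow j$. These are transposes of each other (scalars), so their sum is again $2(\cdot)=0$. The constant pieces $b_i^TC^{-1}b_j+b_j^TC^{-1}b_i$ cancel the same way. We will organize this computation carefully; the symmetry of $C^{-1}$ and of each $C_k$ is what makes every term appear twice. This bookkeeping is the main obstacle, though we expect it to be routine once the terms are written out with $x=C^{-1}b$.

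For (ii), a hSBR gives $F\in\mathbb{F}(z)_1$, and part (i) places $F$ in $K$. So we may write $F=\sum_{|\alpha|\le1}c_\alpha z^\alpha$ with $c_\alpha$ in $\mathbb{F}\cdot\mathbb{F}(z)^2$. We then need to show that the coefficient of $z^0$ is zero and that the others can be chosen homogeneous of degree $0$. Taking homogeneous components behaves well: squares of homogeneous functions of degree $e$ have degree $2e$, so decomposing by parity of degree isolates the parts. Alternatively, we can use the Euler-type identity $\sum_i z_i\partial_i F=F$ for $F$ of degree $1$, which in characteristic $2$ forces the $z^0$ coefficient to vanish. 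We then rescale $q_i\mapsto q_i/z_i$-type adjustments to make each $q_i$ homogeneous of degree $0$, using that $z_iq_i^2$ must be homogeneous of degree $1$ after projecting to degree-$1$ components.
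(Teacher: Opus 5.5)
\textbf{Part (i).} Your argument is correct, and it takes a different route from the paper's. The paper's route is:
\begin{itemize}
\item dispose of $F=0$ and use the Banachiewicz formula $F=\{[A^{-1}]_{11}\}^{-1}$;
\item differentiate once to get $\partial_l F=F^2[(A^{-1})^TA_lA^{-1}]_{11}$;
\item show this lies in $\mathbb{F}(z^2)$ via Albert's classification of alternate and non-alternate symmetric matrices;
\item conclude with Corollary~\ref{cor:2ndMainCor}(i), which rests on the hard direction of Proposition~\ref{prop:FieldOFConstantsForPartialDerivatives}: every constant of $\mathbb{F}(z)$ lies in $\mathbb{F}(z^2)$.
\end{itemize}
You instead work directly with $a-b^TC^{-1}b$ and kill every mixed second derivative by transpose-pairing. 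Your bookkeeping is right. The terms are $b_i^TC^{-1}b_j+b_j^TC^{-1}b_i$, two pairs $u^TC^{-1}C_kC^{-1}v+v^TC^{-1}C_kC^{-1}u$, and $b^TC^{-1}C_iC^{-1}C_jC^{-1}b$ plus its transpose. You then identify the span with $\bigcap_{i\neq j}\ker\partial_i\partial_j$ through the $\mathbb{F}(z^2)$-basis $\{z^\alpha\}_{\alpha\in\{0,1\}^n}$ from the appendix.

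This buys a lot. You need no inversion formula, no case split, no Albert, and only the easy inclusion $\mathbb{F}(z^2)\subseteq$ constants. You should, however, state and justify that $\operatorname{span}_{\mathbb{F}}\{q^2:q\in\mathbb{F}(z)\}$ is the field $\mathbb{F}(z^2)$ (first identity in \eqref{AltReprSpanSquare}), since that is the field over which your basis claim is made. Incidentally, one derivative already suffices. With $x=C^{-1}b$, in characteristic $2$ one has $\partial_i(b^TC^{-1}b)=2b_i^Tx-x^TC_ix=\sum_k (C_i)_{kk}x_k^2\in\mathbb{F}(z^2)$. This also shows that Albert's theorems are dispensable in the paper's route.

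\textbf{Part (ii).} Here there is a gap. Rational functions do not split into homogeneous components; for example, $1/(1+z_1)$ is not a finite sum of homogeneous rational functions. So ``decomposing by parity of degree'' and ``projecting to degree-$1$ components'' presuppose that $c_0$ and the $z_ic_i$ are already sums of homogeneous pieces, which is exactly what must be shown. The rescaling $q_i\mapsto q_i/z_i$ is not an argument.

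Your Euler-identity alternative does correctly give $c_0=0$. The missing idea for the other coefficients is that they are forced. Write $F=c_0+\sum_j z_jc_j$ with all $c_j\in\mathbb{F}(z^2)$. Since these are constants, differentiating gives $c_i=\partial_iF$. Because $\partial_i$ maps $\mathbb{F}(z)_1$ into $\mathbb{F}(z)_0$, you get $c_i\in\mathbb{F}(z^2)\cap\mathbb{F}(z)_0$.

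Even then you are not done. You still need $\mathbb{F}(z^2)\cap\mathbb{F}(z)_0=\operatorname{span}_{\mathbb{F}}\{q^2:q\in\mathbb{F}(z)_0\}$, i.e.\ that the squares can be taken of degree-$0$ functions rather than arbitrary ones. The paper proves this (second identity in \eqref{AltReprSpanSquare}) by writing a nonzero such element as $p(z^2)/q(z^2)$, with $p,q$ homogeneous of a common degree $m$, and dividing numerator and denominator by $z_n^{2m}$. With these two steps inserted, (ii) goes through as in the paper's use of Corollary~\ref{cor:2ndMainCor}(ii).
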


One way to see how differential algebra is involved when $\operatorname{char}(\mathbb{F})=2$ is to note the following two lemmas that we use to prove Theorem \ref{thm:MainResultSBRChar2ScalarCase}. The first lemma is well-known, but for the reader's convenience we will prove it in Sec.\ \ref{sec:DifferentialAlgebra} (see Proposition \ref{prop:FieldOFConstantsForPartialDerivatives}). The second lemma we will prove in Sec.\ \ref{sec:PrfMainThmForSBRUsingDiffAlg} before giving the proof of Theorem \ref{thm:MainResultSBRChar2ScalarCase}.
\begin{lemma}\label{lem:FormalParitalDervFieldOfConstants}
    If $\operatorname{char}(\mathbb{F})=p$ and $\partial/\partial z_1,\ldots, \partial/\partial z_n$ are the formal partial derivatives for the field $\mathbb{F}(z)$ then
    \begin{gather}
        \mathbb{F}(z^p)=\left\{r(z)\in \mathbb{F}(z):\frac{\partial}{\partial z_i}r(z) =0, \text{ for each }i=1,\ldots, n\right\},
    \end{gather}
    where $\mathbb{F}(z^p)$ is the rational field in $z^p=(z_1^p,\ldots, z_n^p)$.
\end{lemma}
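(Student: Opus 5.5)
We prove the two inclusions separately. The easy inclusion is that every element of $\mathbb{F}(z^p)$ is killed by each $\partial/\partial z_i$. For the reverse inclusion we reduce, via a basis, to the case of polynomials.

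\textbf{Easy inclusion.} First we fix the formal partial derivatives on $\mathbb{F}(z)$. On $\mathbb{F}[z]$ they are the usual $\mathbb{F}$-linear derivations with $\partial z_j/\partial z_i=\delta_{ij}$. They extend uniquely to $\mathbb{F}(z)$ by the quotient rule, and they remain derivations there. Let $r=a(z^p)/b(z^p)\in\mathbb{F}(z^p)$. The chain rule gives
\[
\frac{\partial}{\partial z_i}\,a(z^p)=p\,z_i^{p-1}(\partial_i a)(z^p)=0 .
\]
The same holds for $b(z^p)$. The quotient rule then gives $\partial r/\partial z_i=0$ for every $i$.

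\textbf{Reverse inclusion.} We use the field extension $\mathbb{F}(z)/\mathbb{F}(z^p)$. It is generated by $z_1,\ldots,z_n$, and each $z_i$ satisfies $T^p-z_i^p$ over $\mathbb{F}(z^p)$. Hence the monomials
\[
z^\alpha=z_1^{\alpha_1}\cdots z_n^{\alpha_n},\qquad 0\le\alpha_i\le p-1,
\]
span $\mathbb{F}(z)$ over $\mathbb{F}(z^p)$.

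We also want them to be linearly independent, so that they form a basis. To see this, take any $r=f/g\in\mathbb{F}(z)$ and multiply numerator and denominator by $g^{p-1}$. This gives $r=fg^{p-1}/g^p$. The denominator $g^p$ lies in $\mathbb{F}^{p}[z^p]$, and in particular in $\mathbb{F}[z^p]$, because $(\sum c_\beta z^\beta)^p=\sum c_\beta^p z^{p\beta}$ in characteristic $p$. So every $r$ can be written as $h/c$ with $h\in\mathbb{F}[z]$ and $0\ne c\in\mathbb{F}[z^p]$.

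Now split $h$ by the residues of its exponents modulo $p$:
\[
h=\sum_{0\le\alpha_i\le p-1} z^\alpha h_\alpha(z^p).
\]
This decomposition is unique, since distinct pairs $(\alpha,\text{exponent of }h_\alpha)$ produce distinct exponents in $z$. This already gives independence over $\mathbb{F}[z^p]$. Clearing denominators then gives independence over $\mathbb{F}(z^p)$.

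Since $c$ is a constant for each $\partial/\partial z_i$, we have
\[
\frac{\partial r}{\partial z_i}=\frac{1}{c}\frac{\partial h}{\partial z_i}.
\]
So it suffices to show that a polynomial $h$ with $\partial h/\partial z_i=0$ for all $i$ lies in $\mathbb{F}[z^p]$. Differentiating monomial by monomial,
\[
\frac{\partial}{\partial z_i}\bigl(z^\beta\bigr)=\beta_i\,z^{\beta-e_i}.
\]
Distinct monomials $z^\beta$ with $\beta_i\ge 1$ map to distinct monomials $z^{\beta-e_i}$, so the terms cannot cancel. Hence $\partial h/\partial z_i=0$ forces $\beta_i\equiv 0 \pmod p$ for every monomial appearing in $h$. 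Doing this for all $i$ shows that every exponent vector of $h$ lies in $p\mathbb{Z}^n$, that is, $h\in\mathbb{F}[z^p]$. Therefore $r=h/c\in\mathbb{F}(z^p)$.

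\textbf{Main obstacle.} The only delicate point is the reduction step $r=fg^{p-1}/g^{p}$. It is what places the denominator inside the constants, and it needs nothing about $\mathbb{F}$ being perfect. Once the denominator is a constant, the computation is a transparent monomial calculation. This avoids any appeal to a general theory of separability.
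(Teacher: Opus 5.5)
Your proof is correct, but it takes a different route from the paper's. The paper (Proposition~\ref{prop:FieldOFConstantsForPartialDerivatives}) uses your monomial argument only to find the kernel of each single $\partial/\partial z_i$ on $\mathbb{F}[z]$. For a rational function it writes $r=a/b$ in lowest terms. Then $\partial r/\partial z_i=0$ gives $b\,\partial a/\partial z_i=a\,\partial b/\partial z_i$, coprimality gives $a\mid \partial a/\partial z_i$, and comparing $z_i$-degrees forces $\partial a/\partial z_i=0$, and likewise $\partial b/\partial z_i=0$. The case of all $i$ at once is obtained by intersecting the fields $\mathbb{F}(\zeta_{i1},\ldots,\zeta_{in})$; this step is stated without comment and is most easily justified by noting that the lowest-terms representation does not depend on $i$.

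Your Frobenius trick $r=fg^{p-1}/g^{p}$ puts the denominator among the constants from the start, so differentiation only ever acts on a numerator polynomial. This removes the need for unique factorization, the degree comparison, and the intersection step. It also gives the single-derivation version immediately, by running the monomial argument for one index $i$ only. The price is that your argument genuinely needs $p>0$, whereas the gcd argument is characteristic-free: with $p=0$ it recovers that the constants are just $\mathbb{F}$. For the lemma as intended, in positive characteristic, this costs nothing.

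Your paragraph establishing that $\{z^\alpha:0\le\alpha_i\le p-1\}$ is a basis of $\mathbb{F}(z)$ over $\mathbb{F}(z^p)$ is never used in the argument and can be deleted. The paper does use exactly your rationalization trick, in the form $p(z)/q(z)=p(z)q(z)/q(z)^2$, but in its appendix, to prove that basis statement for $p=2$, not in the proof of the proposition.
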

\begin{lemma}\label{lem:PartialDervZeroConnection2SBR}
    If $\operatorname{char}(\mathbb{F})=2$ then \begin{gather}
    \operatorname{span}\{q^2:q\in \mathbb{F}(z)\}=\mathbb{F}(z^2),\; \operatorname{span}\{q^2:q\in \mathbb{F}(z)_0\}=\mathbb{F}(z^2)_0,\label{AltReprSpanSquare}\\
    \operatorname{span}\{q_0^2+z_1q_1^2+\cdots+z_nq_n^2:q_i\in \mathbb{F}(z)\}=\mathbb{F}(z^2)+z_1\mathbb{F}(z^2)+\cdots+z_n\mathbb{F}(z^2),\label{AltReprSpanOfSquares}\\
    \operatorname{span}\{z_1q_1^2+\cdots+z_nq_n^2:q_i\in \mathbb{F}(z)_0\}=z_1\mathbb{F}(z^2)_0+\cdots+z_n\mathbb{F}(z^2)_0,\label{AltReprhSBRSpanOfSquares}
\end{gather}
where $\mathbb{F}(z^2)$ is the rational field in $z^2=(z_1^2,\ldots, z_n^2)$ with $\mathbb{F}(z^2)_0:=\mathbb{F}(z^2)\cap \mathbb{F}(z)_0$ the subfield of $\mathbb{F}(z^2)$ of homogeneous degree-zero elements.
\end{lemma}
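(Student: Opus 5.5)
The four identities follow from one observation: in characteristic $2$ the Frobenius map $q\mapsto q^2$ is a ring homomorphism, so squaring respects sums. We prove \eqref{AltReprSpanSquare} first and then derive \eqref{AltReprSpanOfSquares} and \eqref{AltReprhSBRSpanOfSquares} by linearity.

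\emph{The identities \eqref{AltReprSpanSquare}.} Take $q=a/b$ with $a,b\in\mathbb{F}[z]$. Then $q^2=a^2/b^2$, and since $(\sum c_\alpha z^\alpha)^2=\sum c_\alpha^2 (z^2)^\alpha$, both $a^2$ and $b^2$ lie in $\mathbb{F}[z^2]$. Hence $q^2\in\mathbb{F}(z^2)$, and because $\mathbb{F}(z^2)$ is a field (in particular an $\mathbb{F}$-subspace), the span of the squares is contained in $\mathbb{F}(z^2)$.

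For the reverse inclusion, write $r\in\mathbb{F}(z^2)$ as $r=f(z^2)/g(z^2)$ with $f,g\in\mathbb{F}[z]$. Multiplying numerator and denominator by $g(z^2)$ gives $r=f(z^2)g(z^2)/g(z^2)^2$. Expand $f(z^2)g(z^2)=\sum_\alpha c_\alpha z^{2\alpha}$. Then
\[
r=\sum_\alpha c_\alpha \bigl(z^\alpha/g(z^2)\bigr)^2,
\]
which is an $\mathbb{F}$-linear combination of squares. We never need $\mathbb{F}$ to be perfect, because the coefficients $c_\alpha$ remain outside the squares.

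For the homogeneous version, note that if $q\in\mathbb{F}(z)_0$ then $q^2$ is homogeneous of degree $0$, so $q^2\in\mathbb{F}(z^2)_0$. Conversely, take $r\in\mathbb{F}(z^2)_0$ and write $r=f(z^2)/g(z^2)$ in lowest terms in $\mathbb{F}(z^2)$. Since the substitution $z\mapsto z^2$ is injective, $r$ is also in lowest terms in $\mathbb{F}(z)$. A degree-$0$ homogeneous rational function in lowest terms has homogeneous numerator and denominator of equal degree. Thus we may take $f$ and $g$ homogeneous of the same degree $e$. Every monomial $z^{2\alpha}$ appearing in $f(z^2)g(z^2)$ then has $|\alpha|=2e$, while $g(z^2)$ has degree $2e$. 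So each term $z^\alpha/g(z^2)$ is homogeneous of degree $0$, and the same decomposition works inside $\mathbb{F}(z)_0$.

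This lowest-terms homogeneity fact is the only step needing care, and it is the main potential obstacle. It holds because multiplying all variables by $t$ maps a lowest-terms representation to another one, and uniqueness of lowest terms up to a scalar then forces the numerator and denominator to be homogeneous.

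\emph{The identities \eqref{AltReprSpanOfSquares} and \eqref{AltReprhSBRSpanOfSquares}.} The span of the sums $\{q_0^2+\sum z_iq_i^2\}$ equals the sum of subspaces $\operatorname{span}\{q^2\}+\sum_i z_i\operatorname{span}\{q^2\}$. One inclusion holds because each generator splits into such summands. The other holds because each summand is itself a generator: set all other $q_j=0$. Since multiplication by $z_i$ is $\mathbb{F}$-linear, $\operatorname{span}\{z_iq^2\}=z_i\operatorname{span}\{q^2\}$. Applying \eqref{AltReprSpanSquare} now gives \eqref{AltReprSpanOfSquares}, and the identical argument restricted to $q_i\in\mathbb{F}(z)_0$ gives \eqref{AltReprhSBRSpanOfSquares}.
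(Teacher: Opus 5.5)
\paragraph{Verdict.} Your proof is correct in outline, with one faulty justification (explained below) that is easy to repair. For \eqref{AltReprSpanOfSquares} and \eqref{AltReprhSBRSpanOfSquares} your argument is the paper's reduction: split each generator into summands and use $\operatorname{span}\{z_iq^2\}=z_i\operatorname{span}\{q^2\}$.

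\paragraph{Comparison for \eqref{AltReprSpanSquare}.} Here you take a different route. The paper shows that the span of squares is closed under products and under inverses, via $a^{-1}=(a^{-1})^2a$. Hence it is a subfield of $\mathbb{F}(z^2)$ containing $\mathbb{F}$ and $z_1^2,\ldots,z_n^2$, so it equals $\mathbb{F}(z^2)$. In the homogeneous case the paper writes $r=z_n^{-2m}p(z^2)/z_n^{-2m}q(z^2)$ and uses the subfield property again. You instead give the explicit decomposition $r=\sum_\alpha c_\alpha\bigl(z^\alpha/g(z^2)\bigr)^2$. This treats both cases uniformly and produces concrete square roots; the paper's route is shorter and avoids monomial bookkeeping. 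Both routes need the same input in the homogeneous case: $r\in\mathbb{F}(z^2)_0$ can be written as $f(z^2)/g(z^2)$ with $f,g$ homogeneous of equal degree. The paper takes this for granted (it records only $\deg p=\deg q=m$), whereas you try to prove it.

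\paragraph{The faulty step.} You write ``since the substitution $z\mapsto z^2$ is injective, $r$ is also in lowest terms in $\mathbb{F}(z)$.'' Injectivity alone does not preserve coprimality. For example, the injective map $\mathbb{F}[u,v]\to\mathbb{F}[x,y]$ with $u\mapsto x$, $v\mapsto xy$ sends the coprime pair $u,v$ to $x,xy$. The conclusion is nonetheless true here, but for a different reason, e.g.\ because $\mathbb{F}[z]$ is a free $\mathbb{F}[z^2]$-module on the monomials $z^\beta$, $\beta\in\{0,1\}^n$.

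\paragraph{Repair.} The simplest fix bypasses lowest terms in $\mathbb{F}(z^2)$ entirely.
\begin{enumerate}
\item Use your scaling argument to write $r=a/b$ with $a,b\in\mathbb{F}[z]$ homogeneous of a common degree $d$. Here $t$ must be an indeterminate, working over $\mathbb{F}(t)$, since $\mathbb{F}$ may be finite.
\item Take any representation $r=N/D$ with $N,D\in\mathbb{F}[z^2]$.
\item All nonzero homogeneous components of $D$ have even degree. Comparing components of degree $2e+d$ in $Nb=aD$ gives $N_{2e}b=aD_{2e}$, so $r=N_{2e}/D_{2e}$ for any $e$ with $D_{2e}\neq0$.
\item These components still lie in $\mathbb{F}[z^2]$, i.e.\ $N_{2e}=f(z^2)$ and $D_{2e}=g(z^2)$ with $f,g$ homogeneous of degree $e$.
\end{enumerate}
This is exactly the input your decomposition needs.
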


There are two unexpected applications of this approach. The first is that we know when $\operatorname{char}(\mathbb{F})=2$ there are many more non-SBRs than SBRs and we can quantify this as follows in the case $k=1$. First, $\mathbb{F}(z)$ is a $2^n$-dimensional vector space over $\mathbb{F}(z^2)$ (this is well-known, for instance, see \cite[Prop.\ 2.2]{Santos:2023:PCD}, but for the benefit of the reader we prove it in Sec.\ \ref{sec:AppendixAuxPrfs}) and \eqref{AltReprSpanOfSquares} is an $(n+1)$-dimensional subspace of $\mathbb{F}(z)$ over $\mathbb{F}(z^2)$ (by Corollary \ref{cor:2ndMainCor}). From which it follows by Lemma \ref{lem:PartialDervZeroConnection2SBR} and Theorem \ref{thm:BessmertnyiRealizabilityThm} that there are many more non-SBR elements (within the space of all rational functions over the field $\mathbb{F}$) since the ``density" of non-SBR elements is $1$ as $\lim_{n\rightarrow \infty}\frac{n+1}{2^n}=0$. Similarly, it can be shown that $\mathbb{F}(z)_1$ is a $2^{n-1}$-dimensional vector space over $\mathbb{F}(z^2)_0$ (as proven in Sec.\ \ref{sec:AppendixAuxPrfs}) and \eqref{AltReprhSBRSpanOfSquares} is an $n$-dimensional subspace of $\mathbb{F}(z)_1$ over $\mathbb{F}(z^2)_0$ (by Corollary \ref{cor:2ndMainCor}) from which it follows by Lemma \ref{lem:PartialDervZeroConnection2SBR} and Theorem \ref{thm:BessmertnyiRealizabilityThm} that there are many more non-hSBR elements (within the space of all homogeneous degree-one rational functions over the field $\mathbb{F}$) since the ``density" of non-hSBR elements is $1$ as $\lim_{n\rightarrow \infty}\frac{n}{2^{n-1}}=0$.

Second, we will also be able to prove the theorem below. It is a new result on the field extension problem for Bessmertny{\u\i} realizations for square matrices with entries in $\mathbb{F}(z_1,\ldots, z_n)$ which arises, for instance, if we are also considering these matrices as having entries in a field extension $\mathbb{K}(z_1,\ldots, z_N)$ (with $N\geq n$) of $\mathbb{F}(z_1,\ldots, z_n)$, where $\mathbb{K}$ is an algebraic field extension of $\mathbb{F}$ (see the end of Sec.\ \ref{sec:AppendixAuxPrfs} for an expanded discussion on this) and $z_1,\ldots, z_N$ are indeterminates. Note that because of Theorem \ref{thm:BessmertnyiRealizabilityThm}, this problem is non-trivial only when $\text{char}(\mathbb{F})=2$.

\begin{theorem}\label{thm:MainResultSBRFieldExtension}
     Let $\mathbb{F}$ be a field, $\mathbb{K}$ a field extension of $\mathbb{F}$, and $z_1,\ldots, z_n,\ldots, z_N$ be indeterminates with $N\geq n$. Set $z=(z_1,\ldots, z_n)$ and $w=(w_1,\ldots, w_N)$ with $w_i=z_i$ for $i=1,\ldots, N$. 
     If  $F(z)\in\mathbb{F}(z)^{k\times k}$ and $F(z)$ has a BR, SBR, hBR, or hSBR over the field $\mathbb{K}(w)$ (cf.\ Def.\ \ref{def:BRDefsAndDefsForFieldExtIssue}) then it has a BR, SBR, hBR, or hSBR, respectively, over the field $\mathbb{F}(z)$.
\end{theorem}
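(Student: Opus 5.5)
The plan is to reduce everything to Theorem \ref{thm:BessmertnyiRealizabilityThm}, whose criteria depend only on $F(z)$ and on intrinsic data (field characteristic, number of variables, homogeneity, symmetry, membership of the diagonal entries in certain subspaces). BR is trivial, since by Theorem \ref{thm:BessmertnyiRealizabilityThm}(a) every $F(z)\in\mathbb{F}(z)^{k\times k}$ already has a BR over $\mathbb{F}(z)$.

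For hBR, a hBR over $\mathbb{K}(w)$ means $F\in\mathbb{K}(w)_1^{k\times k}$ by part (b) applied over $\mathbb{K}$ with $N$ variables. Homogeneity of degree one of an element of $\mathbb{F}(z)\subseteq\mathbb{K}(w)$ does not depend on the ambient field: it means $F(tz)=tF(z)$ for a new indeterminate $t$, equivalently $p=fq$ with $p,q$ homogeneous. So $F\in\mathbb{F}(z)_1^{k\times k}$, and part (b) over $\mathbb{F}$ gives a hBR over $\mathbb{F}(z)$. For SBR and hSBR, symmetry $F^T=F$ is likewise field-independent. Since $\operatorname{char}\mathbb{K}=\operatorname{char}\mathbb{F}$, the cases $\operatorname{char}\neq2$ are immediate. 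The cases $n=1$ (for SBR) and $n\le2$ (for hSBR) are also immediate, because over $\mathbb{F}(z)$ the criteria then require nothing beyond symmetry and homogeneity.

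The real content is characteristic $2$ with $n\ge2$ (resp.\ $n\ge3$), and there I use Lemma \ref{lem:PartialDervZeroConnection2SBR}. Let $f\in\mathbb{F}(z)$ be a diagonal entry. By Theorem \ref{thm:MainResultSBRChar2ScalarCase}(i) applied over $\mathbb{K}$ (the $1\times1$ Schur complement of a diagonal principal block of the realizer is still an SBR), $f\in\mathbb{K}(w^2)+\sum_{i=1}^N w_i\mathbb{K}(w^2)$. I need to descend this to $f\in\mathbb{F}(z^2)+\sum_{i=1}^n z_i\mathbb{F}(z^2)$. Here I would use the $\mathbb{F}(z^2)$-basis of $\mathbb{F}(z)$ given by the monomials $z^\alpha$, $\alpha\in\{0,1\}^n$, proved in Sec.\ \ref{sec:AppendixAuxPrfs}. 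Write $f=\sum_\alpha c_\alpha z^\alpha$ with $c_\alpha\in\mathbb{F}(z^2)$. The condition to prove is $c_\alpha=0$ whenever $|\alpha|\ge2$.

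The same monomials $w^\beta$, $\beta\in\{0,1\}^N$, form a basis of $\mathbb{K}(w)$ over $\mathbb{K}(w^2)$, and the expansion of $f$ there is the same expansion (with $\beta$ supported in the first $n$ coordinates), by uniqueness. The hypothesis says the coefficients for $|\beta|\ge2$ vanish, hence $c_\alpha=0$ for $|\alpha|\ge2$. The main obstacle is precisely this compatibility of bases, namely that the $w^\beta$ are linearly independent over $\mathbb{K}(w^2)$. I would establish it via the derivative characterization of Lemma \ref{lem:FormalParitalDervFieldOfConstants}: apply $\prod_{i\in S}\partial/\partial w_i$ to a relation $\sum_\beta c_\beta w^\beta=0$, which kills all terms with $\beta\not\supseteq S$. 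Then induct on $|S|$ from the top, or simply cite the appendix argument, which works over any field of characteristic $2$.

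The homogeneous case is identical. Use $z_1\mathbb{F}(z^2)_0+\cdots$ and decompose $f\in\mathbb{F}(z)_1$ as above. Homogeneity of $f$ forces each $c_\alpha z^\alpha$ to be homogeneous of degree $1$, by uniqueness of the decomposition under the scaling $z\mapsto tz$. So the $c_{e_i}$ lie in $\mathbb{F}(z^2)_0$, and Lemma \ref{lem:PartialDervZeroConnection2SBR} together with Theorem \ref{thm:BessmertnyiRealizabilityThm}(c),(d) over $\mathbb{F}$ concludes.
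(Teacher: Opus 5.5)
Your proof is correct. Its skeleton is the paper's: Theorem \ref{thm:BessmertnyiRealizabilityThm} handles BR, hBR and $\operatorname{char}\neq 2$; SBR/hSBR is reduced to scalar diagonal entries; Theorem \ref{thm:MainResultSBRChar2ScalarCase} is applied over $\mathbb{K}(w)$; the membership is descended to $\mathbb{F}(z)$; and sufficiency over $\mathbb{F}(z)$ finishes.

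The genuine difference is the descent step. The paper uses Corollary \ref{cor:MainCor}. Restricting $\partial/\partial w_i$ to $\mathbb{F}(z)$ shows each $\partial r/\partial z_i$ lies in $\mathbb{K}(w^2)\cap\mathbb{F}(z)$. This intersection is $\mathbb{F}(z^2)$ because both are fields of constants of compatible derivations (Proposition \ref{prop:FieldOFConstantsForPartialDerivatives}). Then $r-\sum_i z_i\,\partial r/\partial z_i$ has vanishing derivatives, so it lies in $\mathbb{F}(z^2)$.

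You instead compare coordinates in square-free monomial bases. Spanning of $\mathbb{F}(z)$ over $\mathbb{F}(z^2)$, linear independence of the $w^\beta$ over $\mathbb{K}(w^2)$, and $\mathbb{F}(z^2)\subseteq\mathbb{K}(w^2)$ force the $\mathbb{F}(z^2)$-coordinates of $f$ to equal its $\mathbb{K}(w^2)$-coordinates. Your route is elementary linear algebra and makes visible exactly which coordinates vanish. The paper's route is basis-free and yields the explicit formula $r=r_0+\sum_i z_i\,\partial r/\partial z_i$, in keeping with its differential-algebra theme.

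One small point in the homogeneous case: you also need $c_0=0$. Your scaling argument only shows that $c_0\in\mathbb{F}(z^2)$ is homogeneous of degree $1$, so you would further need that nonzero homogeneous elements of $\mathbb{F}(z^2)$ have even degree. It is simpler to use your own uniqueness argument directly against the membership $f\in w_1\mathbb{K}(w^2)_0+\cdots+w_N\mathbb{K}(w^2)_0$ from Theorem \ref{thm:MainResultSBRChar2ScalarCase}(ii). Uniqueness gives $c_0=0$ and $c_{e_i}=d_i\in\mathbb{F}(z^2)\cap\mathbb{K}(w)_0=\mathbb{F}(z^2)_0$ at once, using your observation that homogeneity does not depend on the ambient field. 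No scaling argument is then needed.
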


The remainder of the paper is organized as follows. In Sec.\ \ref{sec:DifferentialAlgebra} we give some preliminary results on differential algebras. These are then used in Sec.\ \ref{sec:PrfMainThmForSBRUsingDiffAlg} to prove Theorem \ref{thm:MainResultSBRChar2ScalarCase}. In Sec.\ \ref{sec:SchurComplAlgebra} we recall several basic algebraic results on Schur complements from \cite{Elsinger:BRS:2026} which yields a streamlined proof of Theorem \ref{thm:BessmertnyiRealizabilityThm} except for statements (c)(iii) and (d)(iii), whose proofs are completed in Sec.\ \ref{sec:CompletingPrfMainThm}. Next, in Sec.\ \ref{sec:PrfThmSBRFieldExt}, we prove Theorem \ref{thm:MainResultSBRFieldExtension}. Finally, the paper concludes with an appendix containing auxiliary material that supplements the main discussion.

\section{Differential Algebra}\label{sec:DifferentialAlgebra}
Let us give some preliminaries on differential algebra from the classical books \cite{Kaplansky:1957:IDA, Kolchin:1973:DAA}. For the basic material on abstract algebra such as definitions (and notations) of rings, integral domains, quotient/rational fields, matrix rings, etc., we will use the book \cite{Dummit:2004:AAL}. 
\begin{definition}
    A derivation $d$ of a ring $R$ is a function $d:R\rightarrow R$ satisfying for all $a,b\in R$: 
    \begin{itemize}
        \item[(i)] $d(a+b)=d(a)+d(b)$;
        \item[(ii)] $d(ab)=d(a)b+ad(b)$.
    \end{itemize}
    The set of all derivations of $R$ is denoted by $\operatorname{Der}(R)$. We call $R$ a partial differential ring with partial derivations $\Delta$, provided $\Delta$ is a nonempty finite subset of $\operatorname{Der}(R)$ with the property that $\partial(d(a))=d(\partial(a))$ for all pairs $d,\partial\in \Delta$ and all $a\in R$ (i.e., $\Delta$ is a commuting subset of derivations of $R$); if $R$ is an integral domain or field then it is called a partial differential integral domain or field, respectively. 
\end{definition}

\begin{example}\label{ex:PartialDerivativesAreDerivations}
    Let $\mathbb{F}$ be a field and $z_1,\ldots, z_n$ be indeterminates. Let $\mathbb{F}[z]$ denote the set of all polynomials in $z_1,\ldots, z_n$ [where $z=(z_1,\ldots, z_n)$] with coefficients in $\mathbb{F}$ and denote its rational field by $\mathbb{F}(z)$. Then $\mathbb{F}[z]$ is a partial differential integral domain with a set of formal partial derivations $\Delta=\{\frac{\partial}{\partial z_i}:i=1,\ldots, n\}$ of $\mathbb{F}[z]$ that are uniquely determined by the properties that for all $i,j=1,\ldots, n$: (1) $\frac{\partial}{\partial z_i}(a)=0,$ for all $a\in \mathbb{F}$ and (2) $\frac{\partial}{\partial z_i}(z_j)=\delta_{ij}$ (where $\delta_{ij}$ denotes the Kronecker delta). Next, these derivations can be uniquely extended to partial derivations $\Delta=\{\frac{\partial}{\partial z_i}:i=1,\ldots, n\}$ on $\mathbb{F}(z)$ with the property that for all $i=1,\ldots, n$: (3) $\frac{\partial}{\partial z_i}\left(\frac{p(z)}{q(z)}\right)=\frac{\frac{\partial}{\partial z_i}(p(z))q(z)-p(z)\frac{\partial}{\partial z_i}(q(z))}{q(z)^2},$ for all $p(z),q(z)\in \mathbb{F}[z]$ with $q(z)\not=0$.
    This makes $\mathbb{F}(z)$ a partial differential field with partial derivations $\Delta$. Finally, these derivations can be extended to derivations $\Delta=\{\frac{\partial}{\partial z_i}:i=1,\ldots, n\}$ on the matrix ring $\mathbb{F}(z)^{k\times k}$ (and similarly for $\mathbb{F}[z]^{k\times k}$) by defining for each $l=1,\ldots, n$: $\frac{\partial}{\partial z_l}[r_{ij}(z)]=[\frac{\partial}{\partial z_l}r_{ij}(z)]$ for any matrix $[r_{ij}(z)]\in \mathbb{F}(z)^{k\times k}$. 
\end{example}
From here on out we will just drop the term ``partial" for convenience.

\begin{remark}\label{rem:UsualPartialDerivativesAreLinearOperators}
    Consider the derivations $\Delta=\{\frac{\partial}{\partial z_i}:i=1,\ldots, n\}$ from Example \ref{ex:PartialDerivativesAreDerivations}. In the case that $\mathbb{F}\in \{\mathbb{R},\mathbb{C}\}$, the derivations $\Delta$ of $\mathbb{F}[z], \mathbb{F}(z),$ or $\mathbb{F}(z)^{k\times k}$ are just the usual partial derivatives for such functions $f(z)$, that is, $\partial/\partial z_i f(z)=\lim_{h\rightarrow 0}\frac{f(z+h)-f(z)}{h},$ for each $i=1,\ldots, n$ and hence are linear. In general, it can be shown that for any field $\mathbb{F}$, the derivation $\partial/\partial z_i$ is a linear operator on the vector space $\mathbb{F}[z], \mathbb{F}(z),$ $\mathbb{F}[z]^{k\times k}$, and $\mathbb{F}(z)^{k\times k}$, respectively, over the field $\mathbb{F}$, for each $i=1,\ldots, n$.
\end{remark}

The following gives some well-known properties of differential rings.
\begin{lemma}\label{lem:BasicPropertiesOfDerivations}
    Let $R$ be a differential ring with derivations $\Delta$. Then the following are true:
    \begin{itemize}
        \item[(i)]  $d(0)=0$ for all $d\in \Delta$ and, if $R$ has a multiplicative identity $1$ (i.e., unity element) then $d(1)=0$ for all $d\in \Delta$.
        \item[(ii)] If $a\in R$ has multiplicative inverse $a^{-1}$ then $d(a^{-1})=-a^{-1}d(a)a^{-1}$.
        \item[(iii)] If $R$ is a commutative ring then $d(a^m)=ma^{m-1}d(a)$ for every $m\in\mathbb{N}$ and all $d\in \Delta$.
    \end{itemize}
\end{lemma}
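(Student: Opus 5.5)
Each of the three properties follows directly from the two defining axioms of a derivation, additivity and the Leibniz rule, applied to suitably chosen elements. I fix $d\in\Delta$ and treat the three items in order; the argument for (ii) uses (i), and (iii) is an induction on $m$.

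For (i), additivity gives $d(0)=d(0+0)=d(0)+d(0)$. Adding the additive inverse of $d(0)$ to both sides in the abelian group $(R,+)$ yields $d(0)=0$. If $R$ has a unity element $1$, the Leibniz rule gives $d(1)=d(1\cdot 1)=d(1)\cdot 1+1\cdot d(1)=2d(1)$, and subtracting $d(1)$ yields $d(1)=0$.

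For (ii), I apply $d$ to the identity $aa^{-1}=1$. By the Leibniz rule and (i), $0=d(1)=d(a)a^{-1}+a\,d(a^{-1})$. Multiplying this equation on the left by $a^{-1}$ gives $0=a^{-1}d(a)a^{-1}+d(a^{-1})$, so $d(a^{-1})=-a^{-1}d(a)a^{-1}$. Since $R$ need not be commutative (this covers the matrix rings $\mathbb{F}(z)^{k\times k}$), I keep the order of the factors exactly as it arises. I will also check that applying $d$ to $a^{-1}a=1$ gives the same formula, so the result does not depend on a one-sided inverse convention.

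For (iii), let $R$ be commutative and induct on $m\in\mathbb{N}$. The base case $m=1$ reads $d(a)=1\cdot a^0 d(a)$. Here $a^0$ should be read as the empty product: if $R$ has no unity, the statement is interpreted as $d(a)=d(a)$, with $ma^{m-1}d(a)$ meaning the $m$-fold sum of $a^{m-1}d(a)$ when $m\ge 2$. For the inductive step, assume $d(a^m)=ma^{m-1}d(a)$. The Leibniz rule gives $d(a^{m+1})=d(a^m a)=d(a^m)a+a^m d(a)$. By the induction hypothesis and commutativity, this equals $ma^{m}d(a)+a^m d(a)=(m+1)a^m d(a)$. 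The only real subtlety in the whole lemma is this bookkeeping: respecting noncommutativity in (ii), and handling the meaning of $a^0$ and of integer multiples in (iii). No step is a genuine obstacle.
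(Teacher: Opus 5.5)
Your proof is correct. Each item follows from additivity and the Leibniz rule exactly as you describe, and keeping the factor order in (ii) is the right call, since the paper later applies (ii) in the noncommutative matrix ring to differentiate $A(z)^{-1}$. The paper states this lemma as well known and gives no proof, so there is nothing to compare against; yours is the standard argument the paper implicitly relies on.
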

\begin{definition}
    Let $R$ be a differential ring with derivations $\Delta$. An element $c\in R$ is said to be a constant if $d(c)=0, \forall d\in \Delta$.
\end{definition}

The following result is well-known.
\begin{lemma}\label{lem:BasicResultsOnRingOfConstantsOfADifferentialRing}
    Let $R$ be a differential ring with derivations $\Delta$ with set of constants $C=\{c\in R:d(c)=0,\forall d\in \Delta\}=\bigcap_{d\in \Delta}d^{-1}(\{0\})$. Then the following are true:
    \begin{itemize}
        \item[(i)] $C$ is a subring of $R$ and if, in addition, $R$ is an integral domain then $C$ is also.
        \item[(ii)] If $R$ is a field then $C$ is a subfield of $R$.
        \item[(iii)] If $R$ is a commutative ring with characteristic $p$ then $a^p\in C$ for all $a\in R$.
        \item[(iv)] If $f,g\in R$ then $dg=df$ for all $d\in \Delta$ if and only if $g=f+c$ for some $c\in C$.
    \end{itemize}
\end{lemma}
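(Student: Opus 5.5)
The plan is to verify the four items of Lemma \ref{lem:BasicResultsOnRingOfConstantsOfADifferentialRing} directly from the two defining axioms of a derivation, together with Lemma \ref{lem:BasicPropertiesOfDerivations}. Throughout, the key observation is that $C=\bigcap_{d\in\Delta}\ker d$. Each $d$ is additive, so $\ker d$ is an additive subgroup of $R$; hence $C$ is an additive subgroup as well. Everything therefore reduces to closure under products, inverses, and the $p$-th power computation.

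For (i), I take $a,b\in C$ and apply the Leibniz rule: $d(ab)=d(a)b+ad(b)=0$ for every $d\in\Delta$, so $ab\in C$. Closure under negation follows from additivity, since $d(-a)=-d(a)$. If $R$ has unity, Lemma \ref{lem:BasicPropertiesOfDerivations}(i) gives $d(1)=0$, so $1\in C$; together with additive closure and multiplicative closure this makes $C$ a subring. If $R$ is an integral domain, then $C$ is a commutative subring containing $1\neq 0$. It has no zero divisors because $R$ has none, so $C$ is an integral domain.

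For (ii), it remains to show closure under inverses. For $0\neq a\in C$, Lemma \ref{lem:BasicPropertiesOfDerivations}(ii) gives $d(a^{-1})=-a^{-1}d(a)a^{-1}=0$, so $a^{-1}\in C$. For (iii), Lemma \ref{lem:BasicPropertiesOfDerivations}(iii) gives $d(a^p)=pa^{p-1}d(a)$. The coefficient $p\cdot 1$ is $0$ in a ring of characteristic $p$, so $d(a^p)=0$ for all $d$, and thus $a^p\in C$. For (iv), I set $c=g-f$ and use additivity: $d(g)-d(f)=d(c)$. So $dg=df$ for all $d\in\Delta$ holds exactly when $d(c)=0$ for all $d\in\Delta$, i.e.\ exactly when $c\in C$, which is the same as $g=f+c$ with $c\in C$.

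There is no real obstacle here; the only point needing care is the unity hypothesis in (i). The subring claim uses $1\in C$ only when $R$ has an identity, so I will phrase it as ``subring (containing $1$ whenever $R$ has one)''. In the integral-domain and field cases the identity exists automatically.
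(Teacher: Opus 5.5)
Your proposal is correct. The paper gives no proof of this lemma (it only calls it well-known), and your direct verification from additivity, the Leibniz rule, and Lemma~\ref{lem:BasicPropertiesOfDerivations} is the standard argument being alluded to. One cosmetic point: in (iii) the ring need not have an identity, so justify $p\,a^{p-1}d(a)=0$ by the fact that $p\,x=0$ for every $x\in R$, rather than by $p\cdot 1=0$.
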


The following result, which has Lemma \ref{lem:FormalParitalDervFieldOfConstants} as a special case, is well-known \cite[Chap.\ 9, Sec.\ 6, Exercises 9 \& 10e, p.\ 524]{Cox:2015:IVA}, \cite[Theorem 2.3]{Okuda:2004:KOD} (see also \cite{Nowicki:1988:ROC, Nowicki:1994:RFC} and references therein), but will be proved here for the reader's convenience.
\begin{proposition}\label{prop:FieldOFConstantsForPartialDerivatives}
    Consider the derivations $\Delta=\{\frac{\partial}{\partial z_i}:i=1,\ldots, n\}$ in Example \ref{ex:PartialDerivativesAreDerivations}, where $\mathbb{F}$ is a field with $\operatorname{char}(\mathbb{F})=p$. Then the following statements are true:
    \begin{itemize}
        \item[(i)] For each $i=1,\ldots, n$, the set of constants of the differential integral domain $\mathbb{F}[z]$ with derivation $\frac{\partial}{\partial z_i}$ is the subring of polynomials  $\mathbb{F}[\zeta_{i1},\ldots, \zeta_{in}],$ where $\zeta_{ij}=z_j$ if $i\not=j$ and $\zeta_{ii}=z_i^p$. Moreover, its quotient field is $\mathbb{F}(\zeta_{i1},\ldots, \zeta_{in})$ which is also the subfield of constants of the differential field $\mathbb{F}(z)$ with derivation $\frac{\partial}{\partial z_i}$.
        \item[(ii)]  The set of constants of the differential integral domain $\mathbb{F}[z]$ with $\Delta=\{\frac{\partial}{\partial z_i}:i=1,\ldots, n\}$ is the subring of all polynomials  $\mathbb{F}[z^p],$ where $z^p=(z_1^p,\ldots, z_n^p)$. Moreover, its quotient field is $\mathbb{F}(z^p)$ which is also the subfield of constants of the differential field $\mathbb{F}(z)$ with derivations $\Delta=\{\frac{\partial}{\partial z_i}:i=1,\ldots, n\}$.
    \end{itemize}
\end{proposition}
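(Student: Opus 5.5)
Since $\frac{\partial}{\partial z_i}$ is $\mathbb{F}$-linear (Remark \ref{rem:UsualPartialDerivativesAreLinearOperators}), we work with polynomials first, by expanding in monomials. For part (i), fix $i$ and write $f\in\mathbb{F}[z]$ as $f=\sum_{\alpha} c_\alpha z^\alpha$. The defining properties (1)--(2) of Example \ref{ex:PartialDerivativesAreDerivations}, the Leibniz rule, and Lemma \ref{lem:BasicPropertiesOfDerivations}(iii) give
\[
\frac{\partial}{\partial z_i}z^\alpha=\alpha_i z^{\alpha-e_i}.
\]
The monomials $z^{\alpha-e_i}$ with $\alpha_i\ge1$ are distinct for distinct $\alpha$. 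Hence $\frac{\partial f}{\partial z_i}=0$ if and only if $\alpha_i c_\alpha=0$ in $\mathbb{F}$ for every $\alpha$ with $\alpha_i\geq 1$. Since $\operatorname{char}(\mathbb{F})=p$, this says $p\mid\alpha_i$ whenever $c_\alpha\neq0$. So the polynomial constants are exactly $\mathbb{F}[\zeta_{i1},\ldots,\zeta_{in}]$. The case $p=0$ is handled the same way, where the condition forces $\alpha_i=0$; I will read $z_i^p$ accordingly, or simply note that the paper assumes $p>0$. Part (ii) for polynomials then follows by intersecting over $i$: a monomial survives iff $p\mid\alpha_i$ for all $i$, which gives $\mathbb{F}[z^p]$.

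Passing to $\mathbb{F}(z)$ is the main step. The inclusion ``$\supseteq$'' is immediate. By Lemma \ref{lem:BasicResultsOnRingOfConstantsOfADifferentialRing}(ii) the constants form a subfield of $\mathbb{F}(z)$. That subfield contains $\mathbb{F}[\zeta_{i\cdot}]$, so it contains $\mathbb{F}(\zeta_{i\cdot})$ (respectively $\mathbb{F}(z^p)$).

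For ``$\subseteq$'' I will use the trick of making the denominator a constant. Let $r=f/g$ with $\frac{\partial r}{\partial z_i}=0$. Rewrite
\[
r=\frac{f g^{p-1}}{g^p}.
\]
Here $g^p$ is a constant by Lemma \ref{lem:BasicResultsOnRingOfConstantsOfADifferentialRing}(iii). More precisely, $g^p\in\mathbb{F}[z]$ is a polynomial constant, so $g^p\in\mathbb{F}[z^p]\subseteq\mathbb{F}[\zeta_{i\cdot}]$; this is cleanest when $g$ has coefficients in $\mathbb{F}$ and one applies the polynomial result just proved.

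Then $h:=fg^{p-1}=r\,g^p$ is a polynomial. By the Leibniz rule, $\frac{\partial h}{\partial z_i}=\frac{\partial r}{\partial z_i}\,g^p+r\,\frac{\partial (g^p)}{\partial z_i}=0$. So $h\in\mathbb{F}[\zeta_{i\cdot}]$ by the polynomial case. Hence $r=h/g^p$ is a quotient of two elements of $\mathbb{F}[\zeta_{i\cdot}]$, i.e. $r\in\mathbb{F}(\zeta_{i\cdot})$.

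The same argument with all $i$ simultaneously gives $\mathbb{F}(z^p)$ in (ii). This uses $g^p\in\mathbb{F}[z^p]$ and the fact that $h$ is killed by every $\frac{\partial}{\partial z_i}$.

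Finally, I must justify that $\mathbb{F}(\zeta_{i\cdot})$ is the quotient field of $\mathbb{F}[\zeta_{i\cdot}]$ inside $\mathbb{F}(z)$. This holds because the $\zeta_{ij}$ are algebraically independent over $\mathbb{F}$: distinct monomials in the $\zeta$'s give distinct monomials in $z$. So this is the rational field in those variables, and the argument above exhibits every constant as such a quotient.
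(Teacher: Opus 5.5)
Your proof is correct in positive characteristic. For the passage from $\mathbb{F}[z]$ to $\mathbb{F}(z)$ it takes a genuinely different route from the paper. On polynomials you and the paper argue identically: monomial basis, $\frac{\partial}{\partial z_i}z^\alpha=\alpha_i z^{\alpha-e_i}$, and linear independence of the surviving monomials.

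For rational functions, the paper writes $r=a/b$ in lowest terms and derives $b\,\frac{\partial a}{\partial z_i}=a\,\frac{\partial b}{\partial z_i}$. It uses $\gcd(a,b)=1$ to get $a\mid\frac{\partial a}{\partial z_i}$ and $b\mid\frac{\partial b}{\partial z_i}$, then compares $z_i$-degrees to force both derivatives to vanish. Part (ii) is obtained by intersecting the fields $\mathbb{F}(\zeta_{i1},\ldots,\zeta_{in})$ over $i$. That step tacitly uses that one and the same reduced fraction $a/b$ serves for every $i$.

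Your rewriting $r=fg^{p-1}/g^p$ makes the denominator an absolute constant and reduces everything to the polynomial case in one line. It needs no unique factorization and no degree bookkeeping, and (ii) comes out directly with numerator and denominator both in $\mathbb{F}[z^p]$.

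In exchange, the paper's argument is independent of the characteristic and gives a slightly sharper conclusion: in lowest terms, numerator and denominator are each constants, not merely that $r$ is some quotient of constants. For $p=0$ your trick breaks down, since $g^{p-1}=g^{-1}$ and $h=r$ is no longer a polynomial. So your remark about $p=0$ only covers the polynomial part. The paper's gcd/degree argument works verbatim there, with $\zeta_{ii}=1$ and constants $\mathbb{F}(z_j:j\neq i)$. Given the paper's focus on positive characteristic, this is a limitation of scope rather than a gap. Note, though, that the proof of Corollary~\ref{cor:MainCor} explicitly allows $p=0$.
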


\begin{proof}
    (i): First, $\mathbb{F}[z]$ is a vector space over the field $\mathbb{F}$ in which the set of all monomials in $z$, i.e., $\{z^{\alpha}:\alpha \in (\mathbb{N}\cup \{0\})^n\}$ (cf.\ Sec.\ \ref{sec:AppendixAuxPrfs} for this notation), is a basis. Next, for any $\alpha=(\alpha_1,\ldots, \alpha_n) \in (\mathbb{N}\cup \{0\})^n$, we have $\frac{\partial}{\partial z_i}z^{\alpha}=0$ iff $p|\alpha_i$, i.e., $z^{\alpha}\in \mathbb{F}[\zeta_{i1},\ldots, \zeta_{in}]$. It is also clear that $\{\frac{\partial}{\partial z_i}z^{\alpha}=\alpha_iz_i^{\alpha_i-1}\prod_{j=1, j\not=i}^nz_j^{\alpha_j}:\alpha \in (\mathbb{N}\cup \{0\})^n, p\nmid\alpha_i\}$ is a linearly independent set of vectors in $\mathbb{F}[z]$. It follows from these facts and since the derivation $\frac{\partial}{\partial z_i}$ is a linear operator on $\mathbb{F}[z]$ over the field $\mathbb{F}$ (see Remark \ref{rem:UsualPartialDerivativesAreLinearOperators}), that $\mathbb{F}[\zeta_{i1},\ldots, \zeta_{in}]$ is the set of constants of the differential integral domain $\mathbb{F}[z]$ with derivation $\frac{\partial}{\partial z_i}$. By Lemma \ref{lem:BasicResultsOnRingOfConstantsOfADifferentialRing}(i), it follows that $\mathbb{F}[\zeta_{i1},\ldots, \zeta_{in}]$ is a subring of $\mathbb{F}[z]$ which is an integral domain whose quotient field $\mathbb{F}(\zeta_{i1},\ldots, \zeta_{in})$ is a subfield of $\mathbb{F}(z)$. It follows from this and Lemma \ref{lem:BasicResultsOnRingOfConstantsOfADifferentialRing}(ii) that this quotient field is a subset of the field of constants of the differential field $\mathbb{F}(z)$ with derivation $\frac{\partial}{\partial z_i}$. Next, suppose that $r(z)\in \mathbb{F}(z)$ with $r(z)\not=0$ and $\frac{\partial}{\partial z_i}r(z)=0$. Then $r(z)=\frac{a(z)}{b(z)}$ for some $a(z),b(z)\in \mathbb{F}[z]$ with $b(z)\not=0$ and $\gcd(a(z),b(z))=1$. Hence, $0=\frac{\partial}{\partial z_i}r(z)=\frac{b(z)\frac{\partial}{\partial z_i}a(z)-a(z)\frac{\partial}{\partial z_i}b(z)}{b(z)^2}$ implying $b(z)\frac{\partial}{\partial z_i}a(z)=a(z)\frac{\partial}{\partial z_i}b(z)$. As $\gcd(a(z),b(z))=1$, this implies that $a(z)|\frac{\partial}{\partial z_i}a(z)$ and $b(z)|\frac{\partial}{\partial z_i}b(z)$. But if $\frac{\partial}{\partial z_i}a(z)\not=0$ then $\deg_{z_i}\left[\frac{\partial}{\partial z_i}a(z)\right]<\deg_{z_i}\left[a(z)\right]$, a contradiction that $a(z)|\frac{\partial}{\partial z_i}a(z)$. This proves that $\frac{\partial}{\partial z_i}a(z)=0$ and, similarly, $\frac{\partial}{\partial z_i}b(z)=0$. Hence, $a(z),b(z)\in \mathbb{F}[\zeta_{i1},\ldots, \zeta_{in}]$ implying $r(z)=\frac{a(z)}{b(z)}\in \mathbb{F}(\zeta_{i1},\ldots, \zeta_{in})$. Therefore, we have proven that $\mathbb{F}(\zeta_{i1},\ldots, \zeta_{in})$ is the set of constants of the differential field $\mathbb{F}(z)$ with derivation $\frac{\partial}{\partial z_i}$, for each $i=1,\ldots, n$.

    (ii): Consider the differential integral domain $\mathbb{F}[z]$ with derivations $\Delta=\{\frac{\partial}{\partial z_i}:i=1,\ldots, n\}$. Then from statement (i), it follows that the set of its constants is $\bigcap_{d\in \Delta}d^{-1}(\{0\})=\bigcap_{i=1}^n\mathbb{F}[\zeta_{i1},\ldots, \zeta_{in}]=\mathbb{F}[z_1^p,\ldots, z_n^p]=\mathbb{F}[z^p]$. Similarly, for the differential field $\mathbb{F}(z)$ with derivations $\Delta=\{\frac{\partial}{\partial z_i}:i=1,\ldots, n\}$, it follows from statement (i) that its set of constants is $\bigcap_{d\in \Delta}d^{-1}(\{0\})=\bigcap_{i=1}^n\mathbb{F}(\zeta_{i1},\ldots, \zeta_{in})=\mathbb{F}(z_1^p,\ldots, z_n^p)=\mathbb{F}(z^p)$ which is the quotient field of $\mathbb{F}[z^p]$. This completes the proof of the lemma.
\end{proof}

The following result is an immediate corollary of Prop.\ \ref{prop:FieldOFConstantsForPartialDerivatives} and will be needed to prove Theorem \ref{thm:MainResultSBRFieldExtension} in the case $\operatorname{char}(\mathbb{F})=2$.

\begin{corollary}\label{cor:MainCor}
    Let $\mathbb{F}$ be a field with $\operatorname{char}(\mathbb{F})=p$, $\mathbb{K}$ be a field extension of $\mathbb{F}$, and $z_1,\ldots, z_n,\ldots, z_N$ be indeterminates with $N\geq n$. Set $z=(z_1,\ldots, z_n)$ and $w=(w_1,\ldots, w_N)$ with $w_i=z_i$ for $i=1,\ldots, N$. Then the following statements are true:
    \begin{itemize}
        \item[(i)]  If $r(z)\in \mathbb{F}(z)$ and $r(z)\in \mathbb{K}(w^p)$ [$r(z)\in \mathbb{K}(w^p)\cap \mathbb{K}(w)_0$] then $r(z)\in \mathbb{F}(z^p)$ [resp., $r(z)\in \mathbb{F}(z^p)\cap \mathbb{F}(z)_0$].
        \item[(ii)] If $r(z)\in \mathbb{F}(z)$ and $r(z)\in \mathbb{K}(w^p)+w_1\mathbb{K}(w^p)+\cdots+w_N\mathbb{K}(w^p)$ [ $r(z)\in w_1\mathbb{K}(w^p)\cap \mathbb{K}(w)_0+\cdots+w_N\mathbb{K}(w^p)\cap \mathbb{K}(w)_0$] then $r(z)\in \mathbb{F}(z^p)+z_1\mathbb{F}(z^p)+\cdots+z_n\mathbb{F}(z^p)$ [resp., $r(z)\in z_1\mathbb{F}(z^p)\cap \mathbb{F}(z)_0+\cdots+z_n\mathbb{F}(z^p)\cap \mathbb{F}(z)_0$].
    \end{itemize}
\end{corollary}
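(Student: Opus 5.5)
The plan is to reduce everything to Proposition \ref{prop:FieldOFConstantsForPartialDerivatives}(ii), applied twice: once to the differential field $\mathbb{K}(w)$ with derivations $\{\partial/\partial w_i: i=1,\ldots,N\}$, and once to $\mathbb{F}(z)$ with $\{\partial/\partial z_i:i=1,\ldots,n\}$. For this I first need a compatibility observation. Regard $\mathbb{F}(z)\subseteq\mathbb{K}(w)$ as a subfield.
\begin{itemize}
\item[(a)] For $i\leq n$, the restriction of $\partial/\partial w_i$ to $\mathbb{F}(z)$ maps into $\mathbb{F}(z)$ and equals $\partial/\partial z_i$.
\item[(b)] For $i>n$, the restriction of $\partial/\partial w_i$ to $\mathbb{F}(z)$ is $0$.
\end{itemize}
Both follow from the uniqueness statement in Example \ref{ex:PartialDerivativesAreDerivations}. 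On $\mathbb{F}[z]$ the restriction is a derivation that kills $\mathbb{F}$ and sends $z_j\mapsto\delta_{ij}$, so it agrees with $\partial/\partial z_i$ (or with $0$ when $i>n$). The quotient rule (3) then extends this agreement to $\mathbb{F}(z)$.

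\emph{Homogeneity.} I also need that for $r\in\mathbb{F}(z)$, being homogeneous of degree $0$ in $\mathbb{K}(w)$ is the same as lying in $\mathbb{F}(z)_0$. This should be immediate from whichever definition of homogeneity is used, since it is intrinsic to a reduced representation $a/b$ with $a,b\in\mathbb{F}[z]$. I will state it briefly rather than belabor it.

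\emph{Part (i).} Suppose $r\in\mathbb{F}(z)\cap\mathbb{K}(w^p)$. By Proposition \ref{prop:FieldOFConstantsForPartialDerivatives}(ii) for $\mathbb{K}(w)$, we have $\partial r/\partial w_i=0$ for all $i$. By (a), $\partial r/\partial z_i=0$ for $i=1,\ldots,n$. Proposition \ref{prop:FieldOFConstantsForPartialDerivatives}(ii) for $\mathbb{F}(z)$ then gives $r\in\mathbb{F}(z^p)$. In the bracketed version, $r$ is additionally in $\mathbb{K}(w)_0$, hence in $\mathbb{F}(z)_0$ by the homogeneity remark.

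\emph{Part (ii).} Write $r=c_0+w_1c_1+\cdots+w_Nc_N$ with $c_j\in\mathbb{K}(w^p)$; in the bracketed case, $c_0=0$ and $c_j\in\mathbb{K}(w)_0$. Since each $c_j$ is a constant, the Leibniz rule gives
\[
\frac{\partial r}{\partial w_j}=c_j \quad\text{for } j\geq 1.
\]
For $j>n$, (b) gives $c_j=\partial r/\partial w_j=0$. For $1\leq j\leq n$, (a) gives $c_j=\partial r/\partial z_j\in\mathbb{F}(z)$, so $c_j\in\mathbb{F}(z)\cap\mathbb{K}(w^p)$. Part (i) then yields $c_j\in\mathbb{F}(z^p)$, and $c_j\in\mathbb{F}(z^p)\cap\mathbb{F}(z)_0$ in the bracketed case. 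Finally,
\[
c_0=r-z_1c_1-\cdots-z_nc_n\in\mathbb{F}(z)\cap\mathbb{K}(w^p),
\]
so part (i) gives $c_0\in\mathbb{F}(z^p)$. Substituting back, $r$ has the required form over $\mathbb{F}(z^p)$.

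The argument never needs linear independence of $1,w_1,\ldots,w_N$ over $\mathbb{K}(w^p)$. The derivations extract the coefficients $c_j$ directly, and this works for any $p$. I expect the only real obstacle to be justifying the compatibility facts (a) and (b) and the homogeneity remark carefully; the rest is a short calculation.
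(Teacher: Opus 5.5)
Your proof is correct and follows the paper's argument. You restrict $\partial/\partial w_i$ to $\mathbb{F}(z)$ (giving $\partial/\partial z_i$ for $i\le n$ and $0$ for $i>n$), use Proposition \ref{prop:FieldOFConstantsForPartialDerivatives}(ii) on both fields for (i), and in (ii) read off the coefficients as $\partial r/\partial w_j$ and apply (i) to them. The only cosmetic difference is the constant term: you note $c_0=r-\sum_{j\le n} z_jc_j\in\mathbb{F}(z)\cap\mathbb{K}(w^p)$ (and $c_0=0$ in the bracketed case), whereas the paper shows $r-\sum_k z_k\,\partial r/\partial z_k$ is killed by every $\partial/\partial z_i$ and, in the homogeneous case, invokes $\mathbb{F}(z^p)\cap\mathbb{F}(z)_1=\{0\}$.
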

\begin{proof}
    (i): Suppose $r(z)\in \mathbb{F}(z)$ and $r(z)\in \mathbb{K}(w^p)$ [$r(z)\in \mathbb{K}(w^p)\cap \mathbb{K}(w)_0$]. Then defining the formal partial derivatives $\partial/\partial w_i, i=1,\ldots, N$ as in Example \ref{ex:PartialDerivativesAreDerivations} for $\mathbb{K}(w)$ with $\mathbb{F}$ replaced by $\mathbb{K}$ and $z$ replaced by $w$, it follows by this and the hypotheses that on $\mathbb{F}(z)$ we have $\partial/\partial w_i|_{\mathbb{F}(z)}=\partial/\partial z_i, i=1,\ldots, n$ and $\partial/\partial w_i|_{\mathbb{F}(z)}=0, i>n$; moreover,
    it follows from Prop.\ \ref{prop:FieldOFConstantsForPartialDerivatives}, with $\mathbb{F}$ replaced by $\mathbb{K}$ and $z$ replaced by $w$, that $\mathbb{K}(w^p)$ is the field of constants for the derivations $\{\partial/\partial w_i:i=1,\ldots,N\}$ of $\mathbb{K}(w)$. Hence, we have that $\frac{\partial}{\partial z_i}r(z)=\frac{\partial}{\partial w_i}r(z)=0$ for $i=1,\ldots, n$. From this, it follows by Prop.\ \ref{prop:FieldOFConstantsForPartialDerivatives} [this time for $\mathbb{F}(z)$] that $r(z)\in \mathbb{F}(z^p)$ [and hence if $r(z)\in \mathbb{K}(w^p)\cap \mathbb{K}(w)_0$ then it follows that $r(z)\in \mathbb{F}(z^p)\cap \mathbb{F}(z)_0$].

    (ii): Suppose $r(z)\in \mathbb{F}(z)$ and $r(z)\in \mathbb{K}(w^p)+w_1\mathbb{K}(w^p)+\cdots+w_N\mathbb{K}(w^p)$ [$r(z)\in w_1\mathbb{K}(w^p)\cap \mathbb{K}(w)_0+\cdots+w_N\mathbb{K}(w^p)\cap \mathbb{K}(w)_0$]. Then, for each $i=1,\ldots, n$, we have $\frac{\partial}{\partial z_i}r(z)\in \mathbb{F}(z)$ and also by our hypotheses together with Prop.\ \ref{prop:FieldOFConstantsForPartialDerivatives}, it follows that $\frac{\partial}{\partial z_i}r(z)\in \mathbb{K}(w^p)$ [resp.\ $\frac{\partial}{\partial z_i}r(z)\in \mathbb{K}(w^p)\cap \mathbb{K}(w)_0$] implying by our statement (i) which was just proven, that $\frac{\partial}{\partial z_i}r(z)\in \mathbb{F}(z^p)$ [resp., $\frac{\partial}{\partial z_i}r(z)\in \mathbb{F}(z^p)\cap \mathbb{F}(z)_0$]. Hence, $\frac{\partial}{\partial z_i}\left[r(z)-\sum_{k=1}^nz_k\frac{\partial}{\partial z_k}r(z)\right]=0$ for each $i=1,\ldots, n$ which implies by Prop.\ \ref{prop:FieldOFConstantsForPartialDerivatives} that $r(z)-\sum_{k=1}^nz_k\frac{\partial}{\partial z_k}r(z)\in  \mathbb{F}(z^p)$ [resp., $r(z)-\sum_{k=1}^nz_k\frac{\partial}{\partial z_k}r(z)\in  \mathbb{F}(z^p)\cap \mathbb{F}(z)_1$ and note that either $p=0,1,$ or the nonzero homogeneous elements in $\mathbb{F}(z^p)$ must have degree divisible by $p$, in any case this implies $\mathbb{F}(z^p)\cap \mathbb{F}(z)_1=\{0\}$] from which it follows that $r(z)\in \mathbb{F}(z^p)+z_1\mathbb{F}(z^p)+\cdots+z_n\mathbb{F}(z^p)$ [resp., $r(z)\in z_1\mathbb{F}(z^p)\cap \mathbb{F}(z)_0+\cdots+z_n\mathbb{F}(z^p)\cap \mathbb{F}(z)_0$].
\end{proof}

The proof of this corollary also yields the following result, which will be needed later and may be of independent interest.
\begin{corollary}\label{cor:2ndMainCor}
    Let $\mathbb{F}$ be a field with $\operatorname{char}(\mathbb{F})=p$, $z_1,\ldots, z_n$ be indeterminates, and $r(z)\in \mathbb{F}(z)$ [where $z=(z_1,\ldots, z_n)$]. Then the following statements are true:
    \begin{itemize}
        \item[(i)] $r(z)\in \mathbb{F}(z^p)+z_1\mathbb{F}(z^p)+\cdots+z_n\mathbb{F}(z^p)$ if and only if $\frac{\partial}{\partial z_i}r(z)\in \mathbb{F}(z^p),\; i=1,\ldots, n$, in which case
    \begin{gather*}
        r(z)=r_0(z)+\sum_{i=1}^nz_i\frac{\partial}{\partial z_i}r(z),
    \end{gather*}
    for some $r_0(z)\in \mathbb{F}(z^p)$.
        \item[(ii)] $r(z)\in z_1\mathbb{F}(z^p)\cap\mathbb{F}(z)_0+\cdots+z_n\mathbb{F}(z^p)\cap\mathbb{F}(z)_0$ if and only if $r(z)\in \mathbb{F}(z)_1$ and $\frac{\partial}{\partial z_i}r(z)\in \mathbb{F}(z^p),\; i=1,\ldots, n$, in which case
    \begin{gather*}
        r(z)=\sum_{i=1}^nz_i\frac{\partial}{\partial z_i}r(z).
    \end{gather*}
        \item[(iii)] $\mathbb{F}(z^p)+z_1\mathbb{F}(z^p)+\cdots+z_n\mathbb{F}(z^p)$ is a vector space over the field $\mathbb{F}(z^p)$ with a basis $1,z_1,\ldots, z_n$.
        \item[(iv)] $z_1\mathbb{F}(z^p)\cap\mathbb{F}(z)_0+\cdots+z_n\mathbb{F}(z^p)\cap\mathbb{F}(z)_0$ is a vector space over the field $\mathbb{F}(z^p)\cap\mathbb{F}(z)_0$ with a basis $z_1,\ldots, z_n$. 
    \end{itemize}
\end{corollary}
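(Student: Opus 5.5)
The plan is to derive all four statements from Proposition \ref{prop:FieldOFConstantsForPartialDerivatives}(ii), which identifies $\mathbb{F}(z^p)$ as the field of constants of $\Delta$, together with the two facts $\frac{\partial}{\partial z_i}z_j=\delta_{ij}$ and $\frac{\partial}{\partial z_i}c=0$ for $c\in\mathbb{F}(z^p)$. The computation is essentially the one already carried out in the proof of Corollary \ref{cor:MainCor}(ii), with $\mathbb{K}=\mathbb{F}$ and $N=n$.

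For (i), the forward direction goes as follows. Write $r=c_0+\sum_k z_kc_k$ with $c_k\in\mathbb{F}(z^p)$. The Leibniz rule and constancy of the $c_k$ give $\frac{\partial}{\partial z_i}r=c_i\in\mathbb{F}(z^p)$, and therefore $r=c_0+\sum_i z_i\frac{\partial}{\partial z_i}r$.

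Conversely, suppose $g_k:=\frac{\partial}{\partial z_k}r\in\mathbb{F}(z^p)$ for all $k$, and set $r_0:=r-\sum_k z_kg_k$. Then
\[
\frac{\partial}{\partial z_i}r_0=g_i-\sum_k\delta_{ik}g_k=0
\]
for every $i$, since each $g_k$ is a constant. By Proposition \ref{prop:FieldOFConstantsForPartialDerivatives}(ii), $r_0\in\mathbb{F}(z^p)$, which yields both membership and the displayed formula.

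For (iii), the sum is visibly the $\mathbb{F}(z^p)$-span of $1,z_1,\ldots,z_n$, so only linear independence needs proof. Suppose $c_0+\sum_k z_kc_k=0$ with $c_k\in\mathbb{F}(z^p)$. Applying $\frac{\partial}{\partial z_i}$ gives $c_i=0$ for each $i\ge1$, and then $c_0=0$. This argument requires $p\neq 0$ to make sense as stated. If $p=0$, then $\mathbb{F}(z^0)$ should be read via the constants, which is $\mathbb{F}$, and the same derivative argument works verbatim.

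For (ii) and (iv), I use the description from (i) and note that homogeneity is preserved by $\frac{\partial}{\partial z_i}$: it lowers degree by one. For the forward direction of (ii), if $r=\sum z_kc_k$ with $c_k\in\mathbb{F}(z^p)\cap\mathbb{F}(z)_0$, then $r\in\mathbb{F}(z)_1$ and $\frac{\partial}{\partial z_i}r=c_i\in\mathbb{F}(z^p)$. Conversely, if $r\in\mathbb{F}(z)_1$ and all $\frac{\partial}{\partial z_i}r\in\mathbb{F}(z^p)$, then each $\frac{\partial}{\partial z_i}r$ is homogeneous of degree $0$. Part (i) then gives
\[
r_0=r-\sum_i z_i\frac{\partial}{\partial z_i}r\in\mathbb{F}(z^p)\cap\mathbb{F}(z)_1.
\]
This intersection is $\{0\}$, as noted in the proof of Corollary \ref{cor:MainCor}, so $r=\sum_i z_i\frac{\partial}{\partial z_i}r$.

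The main point to check carefully is this vanishing of $\mathbb{F}(z^p)\cap\mathbb{F}(z)_1$, i.e.\ that a homogeneous element of $\mathbb{F}(z^p)$ has degree divisible by $p$. I would argue it by writing the element as $a/b$ with $a,b\in\mathbb{F}[z^p]$ coprime. Homogeneity of the quotient forces $a$ and $b$ to be homogeneous, using the unique-factorization argument that a reduced fraction of a homogeneous rational function has homogeneous numerator and denominator. Their degrees are then multiples of $p$, so the degree of $a/b$ is too; since $1$ is not such a multiple, the intersection is trivial.

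Finally, (iv) follows exactly as (iii): spanning is by definition, and independence comes from applying each $\frac{\partial}{\partial z_i}$ to $\sum_k z_kc_k=0$ to get $c_i=0$.
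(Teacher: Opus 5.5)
Your proof is correct and follows the paper's route. The paper gives no separate argument; it only notes that the computation in the proof of Corollary~\ref{cor:MainCor}(ii) yields the result, namely $\frac{\partial}{\partial z_i}\bigl[r-\sum_k z_k\frac{\partial}{\partial z_k}r\bigr]=0$ together with Proposition~\ref{prop:FieldOFConstantsForPartialDerivatives}(ii) and $\mathbb{F}(z^p)\cap\mathbb{F}(z)_1=\{0\}$, which is exactly what you carry out, and your derivative test for independence in (iii)--(iv) fills in what the paper leaves implicit. Your justification of $\mathbb{F}(z^p)\cap\mathbb{F}(z)_1=\{0\}$, which the paper only asserts, is fine, though Euler's identity $\sum_i z_i\frac{\partial}{\partial z_i}r=r$ for $r\in\mathbb{F}(z)_1$ gives it in one line, since the left side vanishes when $r\in\mathbb{F}(z^p)$.
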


\section{Proof of Theorem \ref{thm:MainResultSBRChar2ScalarCase}}\label{sec:PrfMainThmForSBRUsingDiffAlg}
In this section we prove Lemma \ref{lem:PartialDervZeroConnection2SBR} and then Theorem \ref{thm:MainResultSBRChar2ScalarCase}.  Let $\mathbb{F}$ be a field with $\operatorname{char}(\mathbb{F})=2$ and $z_1,\ldots, z_n$ be indeterminates. 

\begin{proof}[Proof of Lemma \ref{lem:PartialDervZeroConnection2SBR}]
    First, as $\mathbb{F}(z)$ is a field of characteristic $2$ then $(a+b)^2=a^2+b^2$ for all $a,b\in \mathbb{F}(z)$. From which it follows that $\operatorname{span}\{q^2:q\in \mathbb{F}(z)\}\subseteq\mathbb{F}(z^2)$. Second, for any $a\in \mathbb{F}(z),a\not =0$ we have $a^{-1}=(a^{-1})^2a$ from which it follows that $\operatorname{span}\{q^2:q\in \mathbb{F}(z)\}$ is a subfield of $\mathbb{F}(z^2)$. Finally, as $\operatorname{span}\{q^2:q\in \mathbb{F}(z)\}$ contains $z_1^2,\ldots, z_n^2$ it follows immediately from these facts that $\operatorname{span}\{q^2:q\in \mathbb{F}(z)\}=\mathbb{F}(z^2)$. 
    
    Now we will prove $\operatorname{span}\{q^2:q\in \mathbb{F}(z)_0\}=\mathbb{F}(z^2)_0$, where by definition $\mathbb{F}(z^2)_0=\mathbb{F}(z^2)\cap \mathbb{F}(z)_0$. First, $\mathbb{F}(z^2)_0$ is a subfield of $\mathbb{F}(z)$ since both $\mathbb{F}(z^2)$ and $\mathbb{F}(z)_0$ are subfields of $\mathbb{F}(z)$. And, as above, it follows that $\operatorname{span}\{q^2:q\in \mathbb{F}(z)_0\}$ is a subfield of $\mathbb{F}(z^2)_0$. It remains to prove $\mathbb{F}(z^2)_0\subseteq\operatorname{span}\{q^2:q\in \mathbb{F}(z)_0\}$. Let $r(z)\in \mathbb{F}(z^2)_0$. If $r(z)=0$ then we are done. Suppose $r(z)\not=0$. Then there exist $p(z),q(z)\in \mathbb{F}[z]$ with $p(z)q(z)\not=0$ and $\deg p(z)=\deg q(z)=m$ for some nonnegative integer $m$ such that $r(z)=\frac{p(z^2)}{q(z^2)}=\frac{z_n^{-2m}p(z^2)}{z_n^{-2m}q(z^2)}$. Clearly, $z_n^{-2m}p(z^2),z_n^{-2m}q(z^2)\in \operatorname{span}\{q^2:q\in \mathbb{F}(z)_0\}$ from which it follows that $r(z)\in \operatorname{span}\{q^2:q\in \mathbb{F}(z)_0\}$.

    Next, we will prove equality \eqref{AltReprSpanOfSquares} in Lemma \ref{lem:PartialDervZeroConnection2SBR}. First, as $\operatorname{span}\{q^2:q\in \mathbb{F}(z)\}=\mathbb{F}(z^2)$ then for each $i=1,\ldots, n$, we have $\operatorname{span}\{z_iq^2:q\in \mathbb{F}(z)\}=z_i\operatorname{span}\{q^2:q\in \mathbb{F}(z)\}=z_i\mathbb{F}(z^2)$ from which it follows that $\operatorname{span}\{q_0^2+z_1q_1^2+\cdots+z_nq_n^2:q_i\in \mathbb{F}(z)\}\subseteq\mathbb{F}(z^2)+z_1\mathbb{F}(z^2)+\cdots+z_n\mathbb{F}(z^2)$. Second, since $(a+b)^2=a^2+b^2$ for all $a,b\in \mathbb{F}(z)$ it follows that $\operatorname{span}\{q_0^2+z_1q_1^2+\cdots+z_nq_n^2:q_i\in \mathbb{F}(z)\}$ is a subspace of the vector space $\mathbb{F}(z)$ over the field $\mathbb{F}$ and clearly contains $q(z)^2$ and $z_iq(z)^2$ for any $q(z)\in \mathbb{F}(z)$ and each $i=1,\ldots,n$. Now, if $r(z)\in \mathbb{F}(z^2)+z_1\mathbb{F}(z^2)+\cdots+z_n\mathbb{F}(z^2)$ then $r(z)=r_0(z)+\sum_{i=1}^nz_ir_i(z)$ for some $r_i(z)\in \mathbb{F}(z^2)=\operatorname{span}\{q^2:q\in \mathbb{F}(z)\}$, $i=0,\ldots, n$. This implies that for each $i=0,\ldots, n$, $r_i(z)=\sum_{j=1}^{m_i}c_{ij}q_{ij}(z)^2$ for some $c_{ij}\in \mathbb{F}, q_{ij}(z)\in \mathbb{F}(z),j=1,\ldots, m_i$ so that $r(z)=\sum_{j=1}^{m_0}c_{0j}q_{0j}(z)^2+\sum_{i=1}^n\sum_{j=1}^{m_i}c_{ij}z_iq_{ij}(z)^2\in \operatorname{span}\{q_0^2+z_1q_1^2+\cdots+z_nq_n^2:q_i\in \mathbb{F}(z)\}$. This proves $\mathbb{F}(z^2)+z_1\mathbb{F}(z^2)+\cdots+z_n\mathbb{F}(z^2)\subseteq \operatorname{span}\{q_0^2+z_1q_1^2+\cdots+z_nq_n^2:q_i\in \mathbb{F}(z)\}$. Therefore, $\operatorname{span}\{q_0^2+z_1q_1^2+\cdots+z_nq_n^2:q_i\in \mathbb{F}(z)\}=\mathbb{F}(z^2)+z_1\mathbb{F}(z^2)+\cdots+z_n\mathbb{F}(z^2)$.

    Finally, we will prove equality \eqref{AltReprhSBRSpanOfSquares} in Lemma \ref{lem:PartialDervZeroConnection2SBR}. First, as $\operatorname{span}\{q^2:q\in \mathbb{F}(z)_0\}=\mathbb{F}(z^2)_0$ then for each $i=1,\ldots, n$, we have $\operatorname{span}\{z_iq^2:q\in \mathbb{F}(z)_0\}=z_i\operatorname{span}\{q^2:q\in \mathbb{F}(z)_0\}=z_i\mathbb{F}(z^2)_0$ from which it follows that $\operatorname{span}\{z_1q_1^2+\cdots+z_nq_n^2:q_i\in \mathbb{F}(z)_0\}\subseteq z_1\mathbb{F}(z^2)_0+\cdots+z_n\mathbb{F}(z^2)_0$. Second, as above, $\operatorname{span}\{z_1q_1^2+\cdots+z_nq_n^2:q_i\in \mathbb{F}(z)_0\}$ is a subspace of the vector space $\mathbb{F}(z)$ over the field $\mathbb{F}$ and clearly contains $z_iq(z)^2$ for any $q(z)\in \mathbb{F}(z)_0$ and each $i=1,\ldots,n$. Now, if $r(z)\in z_1\mathbb{F}(z^2)_0+\cdots+z_n\mathbb{F}(z^2)_0$ then $r(z)=\sum_{i=1}^nz_ir_i(z)$ for some $r_i(z)\in \mathbb{F}(z^2)_0=\operatorname{span}\{q^2:q\in \mathbb{F}(z)_0\}$, $i=1,\ldots, n$. This implies that for each $i=1,\ldots, n$, $r_i(z)=\sum_{j=1}^{m_i}c_{ij}q_{ij}(z)^2$ for some $c_{ij}\in \mathbb{F}, q_{ij}(z)\in \mathbb{F}(z)_0,j=1,\ldots, m_i$ so that $r(z)=\sum_{i=1}^n\sum_{j=1}^{m_i}c_{ij}z_iq_{ij}(z)^2\in \operatorname{span}\{z_1q_1^2+\cdots+z_nq_n^2:q_i\in \mathbb{F}(z)_0\}$. This proves $z_1\mathbb{F}(z^2)_0+\cdots+z_n\mathbb{F}(z^2)_0\subseteq \operatorname{span}\{z_1q_1^2+\cdots+z_nq_n^2:q_i\in \mathbb{F}(z)_0\}$. Therefore, $\operatorname{span}\{z_1q_1^2+\cdots+z_nq_n^2:q_i\in \mathbb{F}(z)_0\}=z_1\mathbb{F}(z^2)_0+\cdots+z_n\mathbb{F}(z^2)_0$.
\end{proof}

\begin{proof}[Proof of Theorem \ref{thm:MainResultSBRChar2ScalarCase}]
(i): Suppose that $F(z)\in\mathbb{F}(z)^{1\times 1}$ [$z=(z_1,\ldots, z_n)$] has an SBR. Then
    \begin{gather*}
    F(z)=A(z)/A_{22}(z),\\
    A(z)=[A_{ij}(z)]_{i,j=1,2}=A_0+ z_1A_1+ \cdots +z_nA_n,\\
    A_j^T=A_j\in \mathbb{F}^{m\times m},\;j=0,\ldots ,n.
\end{gather*}
Our goal is to prove \eqref{SBRSubspaceChar2} or, equivalently, by \eqref{AltReprSpanOfSquares}:
\begin{gather}
    F(z)\in\mathbb{F}(z^2)+z_1\mathbb{F}(z^2)+\cdots+z_n\mathbb{F}(z^2).\label{AltInclusionToProveForMainResult}
\end{gather}

First, if $F(z)=0$ then  \eqref{AltInclusionToProveForMainResult} is clearly true. Hence, assume that $F(z)\not=0$. Then $0\not=\det F(z) \det A_{22}(z)= \det A(z)$ by the Schur determinantal formula \cite[Lemma 1.1]{Elsinger:BRS:2026} and so by the Banachiewicz inversion formula \cite[Lemma 2.6]{Elsinger:BRS:2026} we have 
\begin{align}\label{Equationfz}
    F(z) = 
        \left\{\begin{bmatrix}
            A(z)^{-1}
        \end{bmatrix}_{11}\right\}^{-1}.
\end{align}
 Then for each $i=1,\ldots, n$ we have by Lemma \ref{lem:BasicPropertiesOfDerivations}(ii) and representation \eqref{Equationfz} that
 \begin{align*}
        \frac{\partial F(z)}{\partial z_i} &=  - \left\{\begin{bmatrix}
            A(z)^{-1}
        \end{bmatrix}_{11}\right\}^{-1} \frac{\partial \begin{bmatrix}
            A(z)^{-1}
        \end{bmatrix}_{11}}{\partial z_i} \left\{\begin{bmatrix}
            A(z)^{-1}
        \end{bmatrix}_{11}\right\}^{-1} \\
        &=  -F(z)\frac{\partial \begin{bmatrix}
            A(z)^{-1}
        \end{bmatrix}_{11}}{\partial z_i} F(z)\\
        &=  -F(z) \begin{bmatrix} \frac{ \partial
            A(z)^{-1}}{\partial z_i}
        \end{bmatrix}_{11} F(z)\\
         &=  -F(z) \begin{bmatrix}  -A(z)^{-1} \frac{ \partial
          A(z) }{\partial z_i}  A(z)^{-1}
        \end{bmatrix}_{11} F(z)\\
         &=  F(z)^2 \begin{bmatrix}  A(z)^{-1} A_i  A(z)^{-1}
        \end{bmatrix}_{11}\\
        &= F(z)^2 \begin{bmatrix}  (A(z)^{-1})^T A_i  A(z)^{-1}
        \end{bmatrix}_{11}.
\end{align*}
Next, we claim that 
\begin{gather*}
    \frac{\partial F(z)}{\partial z_i}\in \mathbb{F}(z^2),
\end{gather*}
for $i=1,\ldots, n$. Once this claim is proven, it will follow immediately from
Corollary \ref{cor:2ndMainCor} that \eqref{AltInclusionToProveForMainResult} holds thereby proving Theorem \ref{thm:MainResultSBRChar2ScalarCase}(i).

Before we prove this claim recall the following definition from \cite{Albert:1938:SAM}.
\begin{definition}
    Let $A$ be a symmetric matrix with entries in a field of characteristic $2$. Then $A$ is called alternate if all its diagonal entries are zero; otherwise, it is called non-alternate symmetric.
\end{definition}

\begin{proof}[Proof of the claim] 
First, by the hypotheses, both $\mathbb{F}$ and hence $\mathbb{F}(z)$ are fields of characteristic $2$ and $A_l$ is an $m\times m$ symmetric matrix with entries in $\mathbb{F}\subseteq \mathbb{F}(z)$, for each $l=1,\ldots,n$. For any such $l$, we will consider separately the two possible cases for $A_l$: it is either alternate or non-alternate symmetric. Suppose $A_l$ is alternate. Then by \cite[Theorem 1]{Albert:1938:SAM}, so is the symmetric matrix $(A(z)^{-1})^T A_l  A(z)^{-1}$ by congruency. Hence for the alternate matrix $(A(z)^{-1})^T A_l  A(z)^{-1}$, all its diagonal entries are zero and, in particular, $\begin{bmatrix}  (A(z)^{-1})^T A_l  A(z)^{-1}
        \end{bmatrix}_{11}=0$ implying $\frac{\partial F(z)}{\partial z_l} = F(z)^2 \begin{bmatrix}  (A(z)^{-1})^T A_l  A(z)^{-1}
        \end{bmatrix}_{11}=0\in \mathbb{F}(z^2)$. Suppose  now that $A_l$ is a non-alternate symmetric matrix. In this case, \cite[Theorem 6]{Albert:1938:SAM}, says $A_l$ is congruent to a diagonal matrix, that is, there exists an invertible matrix $P\in \mathbb{F}^{m\times m}$ and scalars $b_1,\ldots, b_m\in \mathbb{F}$ (with dependence on index $l$ suppressed) such that
\begin{equation*}
    A_l=P^T~ \text{diag}(b_1,\ldots,b_m)~ P.
\end{equation*}
Thus in terms of the entries of the matrix $PA(z)^{-1}=\begin{bmatrix}
    c_{ij}(z)
\end{bmatrix}_{i,j=1,\ldots, m}$, we have
\begin{align*}
 \frac{\partial F(z)}{\partial z_l} &=  F(z)^2 \begin{bmatrix}  (A(z)^{-1})^T A_l  A(z)^{-1}
        \end{bmatrix}_{11} \\
        &=F(z)^2 \begin{bmatrix}  (PA(z)^{-1})^T ~\text{diag}(b_1,\ldots,b_m) ~PA(z)^{-1}
        \end{bmatrix}_{11} \\
        &=\sum_{i=1}^{m} b_i(F(z)c_{i1}(z))^2\in \mathbb{F}(z^2).
\end{align*}
This completes the proof of the claim.
\end{proof}

(ii): Suppose $F(z)\in \mathbb{F}(z)^{1\times 1}$ has an hSBR. Then $F(z)\in \mathbb{F}(z)_1$, which implies that $\frac{\partial}{\partial z_i}F(z)\in \mathbb{F}(z)_0$ for each $i=1,\ldots, n$, and since $F(z)$ has an SBR it follows by part (i) that $\frac{\partial}{\partial z_i}F(z)\in \mathbb{F}(z^2)\cap \mathbb{F}(z)_0=\mathbb{F}(z^2)_0$ for each $i=1,\ldots, n$ implying by Corollary \ref{cor:2ndMainCor} that $F(z)\in z_1\mathbb{F}(z^2)_0+\cdots+z_n\mathbb{F}(z^2)_0$ which by  \eqref{AltReprhSBRSpanOfSquares} proves Theorem \ref{thm:MainResultSBRChar2ScalarCase}(ii).
\end{proof}

\section{Schur Complement Algebra}\label{sec:SchurComplAlgebra}
In our discussion below, $\mathbb{F}$ is a field and $z_1,\ldots, z_n,z_{n+1}$ are indeterminates with $z=(z_1,\ldots,z_n)$ and $z/z_{n+1}=(z_1/z_{n+1},\ldots, z_n/z_{n+1})$. Assume that $A=A(z), B=B(z)$ are linear matrix pencils with entries in $\mathbb{F}[z]$ and that there exist partitions of the matrices $A=[A_{ij}]_{i,j=1,2}, B=[B_{ij}]_{i,j=1,2}$ into $2\times 2$ block matrix form, such that $A_{22}, B_{22}$ are invertible. In addition, we assume their Schur complements $A/A_{22}, B/B_{22}$ are the same size (or, equivalently, $A_{11}, B_{11}$ are the same size); so, $A/A_{22},B/B_{22}\in \mathbb{F}(z)^{k\times k}$ (for some $k\in\mathbb{N}$). Let $U\in \mathbb{F}^{l\times k}, V\in \mathbb{F}^{k\times l},$ and $X=X(z)\in \mathbb{F}(z)^{k\times k}$ be an invertible linear matrix pencil.

It was shown in \cite{Elsinger:BRS:2026} that for each of the following operations (I)-(VI), the output has a BR over the field $\mathbb{F}(z)$ and for (VII), the output has a BR over the field $\mathbb{F}(z_1,\ldots, z_n,z_{n+1})$:
\begin{itemize}
           \item[(I)] Scalar multiplication (\cite[Lemma 2.1]{Elsinger:BRS:2026}): $\lambda (A/A_{22})$, where $\lambda\in \mathbb{F}$;
           \item[(II)] Sum (\cite[Lemma 2.2]{Elsinger:BRS:2026}): $A/A_{22}+B/B_{22}$;
            \item[(III)] Symmetrization (\cite[Lemma 2.3]{Elsinger:BRS:2026}): $A/A_{22}+(A/A_{22})^T$;
           \item[(IV)] Matrix multiplication (\cite[Lemma 2.4 \& 2.5]{Elsinger:BRS:2026}): $$(a)\;U(A/A_{22})V;\; (b)\;(A/A_{22})X^{-1}(B/B_{22});$$
           \item[(V)] Inversion (\cite[Lemma 2.6]{Elsinger:BRS:2026}): $(A/A_{22})^{-1}$, if $A/A_{22}$ is invertible;
            \item[(VI)] Kronecker product (\cite[Lemma 2.7]{Elsinger:BRS:2026}): $A/A_{22}\otimes I$;
           \item[(VII)] Homogenization (\cite[Lemma 2.8]{Elsinger:BRS:2026}): $z_{n+1}A(z/z_{n+1})/A_{22}(z/z_{n+1})$.
\end{itemize}
In addition, for these operations the corresponding lemmas in \cite{Elsinger:BRS:2026} give us even more regarding the output BR that can be produced from the input regarding sBR, hBR, or hSBR: For operations (I), (II), and (VI) if $A(z)$ and $B(z)$ are both sLP, hLP, or hsLP then the output has a SBR, hBR, or hSBR, respectively. For operation (III), the output has an SBR and if $A(z)$ is an hLP then the output has an hSBR. For operation (IV)(a), if $A(z)$ is hLP then the output has an hBR. Furthermore, if $V=U^T$ and $A(z)$ is sLP or hsLP then the output has an SBR or hSBR, respectively. For operation (IV)(b), if $A(z), B(z), X(z)$ are hLP then the output has an hBR. Furthermore, if $B(z)=A(z)^T,X(z)=X(z)^T$ then the output has an SBR and if, in addition, $A(z)$ and $X(z)$ are hLP then the output has an hSBR. For operation (V), if $A(z)$ is sLP then the output has an SBR. Finally, for operation (VII), the output has an hBR and if $A(z)$ is a sLP then the output has an hSBR. 

Now from those results and since $M=\begin{bmatrix} M & 0\\ 0 & 1 \end{bmatrix}/\begin{bmatrix} 1 \end{bmatrix}$ for any matrix $M\in \mathbb{F}^{k\times k}$ and  $z_i=\begin{bmatrix} z_i & 0\\ 0 & 1 \end{bmatrix}/\begin{bmatrix} 1 \end{bmatrix}$ for $i=1,\ldots, n$, it is easily shown, as in \cite[Sec.\ 2.4]{Elsinger:BRS:2026}, that Theorem \ref{thm:BessmertnyiRealizabilityThm} is true with the exception of case (iii) in statements (c) and (d) of that theorem. For this, there are three results that follow almost immediately from all this that are worth noting. The first is \cite[Lemma 2.10 \& Lemma 5.1]{Elsinger:BRS:2026}:

\begin{lemma}\label{lem:SBRhSBRProofReducesToDiagonals}
    Suppose $F(z)=[F_{ij}(z)]_{i,j=1,\ldots, k}\in \mathbb{F}(z)^{k\times k}$. Then $F(z)$ has an SBR (hSBR) if and only if its $i$th diagonal entry $F_{ii}(z)\in \mathbb{F}(z)^{1\times 1}$ has an SBR (hSBR) for each $i=1,\ldots, k$.
\end{lemma}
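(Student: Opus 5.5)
Both directions rest on the Schur complement operations (I)--(VI) and their symmetric versions. Set $e_i$ to be the $i$th standard basis vector of $\mathbb{F}^{k}$.

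For necessity, suppose $F(z)$ has an SBR (resp.\ hSBR) with realizer $A(z)$. Then $F_{ii}(z)=e_i^TF(z)e_i$ is obtained by operation (IV)(a) with $U=e_i^T$ and $V=e_i=U^T$. Since $A(z)$ is an sLP (resp.\ hsLP), that operation gives an SBR (resp.\ hSBR) of $F_{ii}(z)$.

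For sufficiency, suppose each diagonal entry $F_{ii}(z)$ has an SBR (resp.\ hSBR). Note first that an SBR of any $F(z)$ forces $F(z)^T=F(z)$: the Schur complement of a symmetric pencil is symmetric, because $A_{21}=A_{12}^T$ and $A_{22}^{-1}$ is symmetric. So the statement should be read with $F(z)$ symmetric, as in Theorem \ref{thm:BessmertnyiRealizabilityThm}(c),(d), and I will assume this. In the hSBR case I will also assume $F(z)\in\mathbb{F}(z)_1^{k\times k}$; otherwise the forward direction already fails.

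Decompose
\[
F(z)=\sum_{i=1}^k E_{ii}F_{ii}(z)+\sum_{i<j}F_{ij}(z)\,(E_{ij}+E_{ji}),
\]
where $E_{ij}$ are the matrix units.

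\emph{Diagonal terms.} Write $E_{ii}F_{ii}(z)=e_iF_{ii}(z)e_i^T$. This is operation (IV)(a) with $U=e_i$ and $V=e_i^T=U^T$ applied to the given SBR (hSBR) of $F_{ii}(z)$, so it has an SBR (hSBR).

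\emph{Off-diagonal terms.} Let $G_{ij}(z)=F_{ij}(z)E_{ij}$. By Theorem \ref{thm:BessmertnyiRealizabilityThm}(a) (resp.\ (b), using $F_{ij}\in\mathbb{F}(z)_1$), the scalar $F_{ij}(z)$ has a BR (resp.\ hBR). Operation (IV)(a) with $U=e_i$, $V=e_j^T$ then gives $G_{ij}(z)$ a BR (resp.\ hBR). Symmetrization (III) now makes $G_{ij}+G_{ij}^T=F_{ij}(z)(E_{ij}+E_{ji})$ an SBR, and an hSBR when the input realizer is an hLP.

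\emph{Assembly.} Summing all these terms with operation (II) preserves the sLP (hsLP) property, so $F(z)$ has an SBR (hSBR).

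The one delicate point is the symmetrization step in characteristic $2$. A symmetric off-diagonal matrix cannot be written as $\tfrac12(G+G^T)$, so I have deliberately used $G_{ij}+G_{ij}^T$ with $G_{ij}$ strictly upper triangular, which needs no division by $2$. Also, the forms of (III) and (IV)(a) I am relying on apply to the Schur complement of a single pencil, so each term must be realized by its own pencil before summing. I expect the bookkeeping to be routine once these points are handled.
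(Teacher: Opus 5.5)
Your proof is correct and is essentially the derivation the paper relies on. The paper does not reprove this lemma; it cites \cite[Lemmas 2.10 \& 5.1]{Elsinger:BRS:2026} as following almost immediately from the Schur complement operations, and your decomposition of $F(z)$ into diagonal terms $e_iF_{ii}(z)e_i^T$ and off-diagonal terms $G_{ij}+G_{ij}^T$, assembled via (II), (III), (IV)(a) and Theorem~\ref{thm:BessmertnyiRealizabilityThm}(a),(b) without any division by $2$, is exactly that argument. You are also right that the lemma as printed tacitly needs $F(z)^T=F(z)$ (and $F(z)\in\mathbb{F}(z)_1^{k\times k}$ in the hSBR case), but note that it is the converse (``if'') direction that fails without these hypotheses, not the forward one, which is vacuous without them.
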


The next two results require a few preliminary definitions from \cite[Sec.\ 4]{Elsinger:BRS:2026} and so we do so in the next subsection.

\subsection{Jordan Algebra of SBRs and hSBRs}

In this section, we give results that help to explain the discrepancy in classifying SBRs (and hSBRs) over characteristic $2$. The crucial construction is that of a Jordan Algebra. For the standard definitions and notions below we follow \cite{McCrimmon:1969:OHT, McCrimmon:2010:TJA}.

Consider the associative algebra $\mathcal{A}=\mathbb{F}(z)^{k\times k}$ over the field $\mathbb{F}$, where $\operatorname{char}(\mathbb{F})=2$. As is well-known, since it is an associative algebra, we can form a unital quadratic Jordan algebra $\mathcal{A}^+$ via
\begin{gather*}
    \mathcal{A}^+:U_xy=xyx,\;x^2=xx,\;\{xyz\}=xyz+zyx,\;x\circ y=xy+yx.
\end{gather*}
An important example of a Jordan subalgebra of $\mathcal{A}^+$ (and hence by definition a special Jordan algebra) is the symmetric elements with transpose involution (also known as the Hermitian algebra of self-adjoint elements in the associative algebra $\mathcal{A}$ with involution $T$):
\begin{gather*}
    H(\mathcal{A},T)=\{x\in \mathcal{A}:x^T=x\}\subseteq \mathcal{A}^+.
\end{gather*}
We recall the following definition (more specifically, it is called an ample Hermitian algebra or ample Hermitian subspace, and it is also a Jordan algebra).
\begin{definition}
    A set $H_0(\mathcal{A},T)$ is called an ample subspace of $H(\mathcal{A},T)$ if $H_0(\mathcal{A},T)$ is a subspace of $H(\mathcal{A},T)$ that contains all ``traces" $x+x^T,$ all ``norms" $xx^T$, and $xH_0(\mathcal{A},T)x^T\subseteq H_0(\mathcal{A},T)$ for all elements $x\in \mathcal{A}$.
\end{definition}

Now the largest ample subspace is $H(\mathcal{A},T)$, while the smallest is the \textit{core} of $\mathcal{A}$ which is the space $K_0(\mathcal{A})$ spanned by the norms and traces, i.e.,
$$K_0(\mathcal{A}):=\bigg\{x+x^T+\sum_i \lambda_ix_ix_i^T~\bigg|~x,x_i\in\mathcal{A},\lambda_i\in\mathbb{F}\bigg\},$$
and the \textit{ample hull} of a subset $S\subseteq H(\mathcal{A},T)$ is the smallest ample subspace containing $S$ (cf.\ \cite[p.\ 387]{McCrimmon:1969:OHT}).

This second result of note is the following.
\begin{theorem}\label{thm:JordanAlgSBRs}
    Consider the associative algebra $\mathcal{A}=\mathbb{F}(z)^{k\times k}$ over a field $\mathbb{F}$ of characteristic $2$ with matrix transposition $T$ acting as the involution, giving rise to the unital quadratic Jordan algebra $\mathcal{A}^+$. Define the Jordan subalgebras, 
    \begin{gather*}
        H(\mathcal{A},T)=\{F(z)\in \mathbb{F}(z)^{k\times k}:F(z)^T=F(z)\},\\
        H_{SBR}(\mathcal{A},T)=\{F(z)\in \mathbb{F}(z)^{k\times k}:F(z) \text{ has an SBR}\}.
    \end{gather*}
    Then $H_{SBR}(\mathcal{A},T)$ is an ample subspace of $H(\mathcal{A},T)$, in fact, it is the ample hull of $\{I_k,z_1I_k,\ldots, z_nI_k\}\subseteq H(\mathcal{A},T)$.
\end{theorem}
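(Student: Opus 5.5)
The plan is to verify the axioms of an ample subspace for $H_{SBR}(\mathcal{A},T)$ directly from the Schur complement operations (I)--(VI). After that, we show minimality by reducing to diagonal entries via Lemma \ref{lem:SBRhSBRProofReducesToDiagonals} and Theorem \ref{thm:MainResultSBRChar2ScalarCase}.

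\emph{Subspace.} By definition $H_{SBR}(\mathcal{A},T)\subseteq H(\mathcal{A},T)$, since the Schur complement of a sLP is symmetric. It is closed under $\mathbb{F}$-linear combinations by (I) and (II) applied with sLP inputs, and $0$ has an SBR.

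\emph{Traces.} For any $x\in\mathcal{A}$, part (a) of Theorem \ref{thm:BessmertnyiRealizabilityThm} gives a BR $x=A/A_{22}$. Then (III) shows that $x+x^T$ has an SBR.

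\emph{Sandwiching and norms.} For $x\in\mathcal{A}$ and $h\in H_{SBR}$, I would write $x=A/A_{22}$ and $h=C/C_{22}$ with $C$ an sLP, and aim for $xhx^T=(A/A_{22})\,h\,(A/A_{22})^T$ to have an SBR. The most direct route is (IV)(b) with $B=A^T$ and the middle factor $X^{-1}$ replaced by $h$. Since (IV)(b) as stated requires $X$ to be an invertible symmetric pencil, I would first handle invertible $h$: by (V), $h^{-1}$ has an SBR, and I would re-run the proof of (IV)(b) with the pencil of $h^{-1}$ in place of $X$. The cleaner alternative is to use Lemma \ref{lem:SBRhSBRProofReducesToDiagonals}: $xhx^T$ is symmetric, so it suffices to show each diagonal entry $\sum_{j,l}x_{ij}h_{jl}x_{il}$ has an SBR. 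In characteristic $2$ this equals $\sum_j x_{ij}^2h_{jj}$, because the off-diagonal terms pair up and cancel.

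So the question reduces to the scalar statement: if $h_{jj}$ has an SBR and $q\in\mathbb{F}(z)$, then $q^2h_{jj}$ has an SBR. For $n\ge2$ this follows from Theorem \ref{thm:BessmertnyiRealizabilityThm}(c)(iii) together with Lemma \ref{lem:PartialDervZeroConnection2SBR}, since $\mathbb{F}(z^2)+\sum_i z_i\mathbb{F}(z^2)$ is an $\mathbb{F}(z^2)$-module and $q^2\in\mathbb{F}(z^2)$. For $n=1$, every symmetric element is realizable, so there is nothing to prove. Taking $h=I_k$, which has an SBR, gives the norms $xx^T$. Hence $H_{SBR}$ is ample.

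\emph{Minimality.} Let $H_0$ be any ample subspace containing $I_k,z_1I_k,\ldots,z_nI_k$, and take $F\in H_{SBR}$. Then $F=D+N+N^T$, where $D$ is the diagonal part and $N$ is the strictly upper part, so $N+N^T$ is a trace and lies in $H_0$. It remains to get $D$. Each $F_{ii}$ has an SBR by Lemma \ref{lem:SBRhSBRProofReducesToDiagonals}, so by Theorem \ref{thm:MainResultSBRChar2ScalarCase}(i) it lies in $\operatorname{span}\{q_0^2+\sum_l z_lq_l^2\}$. Therefore $F_{ii}E_{ii}$ is an $\mathbb{F}$-combination of terms
\[
q^2E_{ii}=(qE_{ii})I_k(qE_{ii})^T,\qquad q^2z_lE_{ii}=(qE_{ii})(z_lI_k)(qE_{ii})^T,
\]
and both lie in $H_0$ by the sandwich axiom.

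The main obstacle I expect is the sandwich closure $xH_{SBR}x^T\subseteq H_{SBR}$ when $x$ is an arbitrary rational matrix. The diagonal reduction above depends on Theorem \ref{thm:BessmertnyiRealizabilityThm}(c), and the completion of (c)(iii) is deferred to Sec.\ \ref{sec:CompletingPrfMainThm}. So I would check carefully that this use is not circular, falling back on a direct pencil construction via (IV)(b) if needed.
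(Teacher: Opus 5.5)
Your minimality half is exactly the paper's argument: split off $N+N^T$ as a trace, expand each $F_{ii}$ by Theorem~\ref{thm:MainResultSBRChar2ScalarCase}(i), and use $z_lq^2E_{ii}=(qE_{ii})(z_lI_k)(qE_{ii})^T$. The paper does not reprove ampleness; it imports it from \cite[Theorem 4.2]{Elsinger:BRS:2026}. Your subspace and trace steps are fine, and the characteristic-$2$ identity $(xhx^T)_{ii}=\sum_j x_{ij}^2h_{jj}$ is correct.

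\textbf{The gap.} Your sandwich step, as written, is circular, as you feared. You conclude that $q^2h_{jj}$ has an SBR from the \emph{sufficiency} half of Theorem~\ref{thm:BessmertnyiRealizabilityThm}(c)(iii). But in Sec.~\ref{sec:CompletingPrfMainThm} the paper obtains that very inclusion, $\operatorname{span}\{q_0^2+\sum_i z_iq_i^2\}\subseteq\{F: F\text{ has an SBR}\}$, from the theorem you are proving. Citing Theorem~\ref{thm:BessmertnyiRealizabilityThm} as a published result avoids the loop, but only by defeating this theorem's role in reproving (c)(iii). Your fallback does not close the gap either: re-running (IV)(b) with the pencil of $h^{-1}$ covers only invertible $h$.

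\textbf{The repair.} It takes one line and needs neither (c)(iii) nor the fallback. The only deep half of (c)(iii) is necessity, which is Theorem~\ref{thm:MainResultSBRChar2ScalarCase}(i), proved independently in Sec.~\ref{sec:PrfMainThmForSBRUsingDiffAlg}. Sufficiency follows from (IV)(b) with $k=1$ and $B=A^T$. Since $q$ and $z_iq$ have BRs by (a), the products $q^2=q\cdot 1^{-1}\cdot q$ and $z_iq^2=(z_iq)\,z_i^{-1}\,(z_iq)$ have SBRs; take $X=1$, resp.\ $X=z_i$, which are invertible symmetric $1\times 1$ pencils. Then (I), (II) and \eqref{AltReprSpanSquare} show that every element of $\mathbb{F}(z^2)+\sum_i z_i\mathbb{F}(z^2)$ has an SBR, for every $n$, so your separate $n=1$ case is unnecessary.

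\textbf{Comparison.} With this patch your proof is complete and genuinely different from the paper's. You first establish the scalar characterization from the differential-algebra criterion plus (IV)(b). You then read ampleness off the diagonals: traces have zero diagonal, and norms and sandwiches have diagonal entries $\sum_j x_{ij}^2h_{jj}$. So in characteristic $2$, ampleness becomes a corollary of Theorem~\ref{thm:MainResultSBRChar2ScalarCase}(i). The paper goes the other way: it takes ampleness from \cite[Theorem 4.2]{Elsinger:BRS:2026} and deduces the sufficiency half of (c)(iii) from it. That keeps this section short, but it leans on the earlier paper for exactly the part you prove directly.
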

\begin{proof}
    Ampleness of $H_{SBR}(\mathcal{A},T)$ was already established in \cite[Theorem 4.2]{Elsinger:BRS:2026}.  Now it is clear that $S:=\{I_k,z_1I_k,\ldots, z_nI_k\}\subseteq H(\mathcal{A},T)$. Let us consider an ample subspace $\mathcal{H}$ containing $S$. We claim that $H_{SBR}(\mathcal{A},T)\subseteq \mathcal{H}$, and to that effect, choose $h\in H_{SBR}(\mathcal{A},T)$. Since $\mathcal{H}$ contains all traces, it is enough to show that $\tilde{h}:=\text{diag} (h):=\text{diag} (h_1,h_2,\ldots,h_k)\in\mathcal{H}$ and, as $\mathcal{H}$ is a linear subspace, this amounts to showing that $h_le_le_l^T=\text{diag} (0,\ldots, h_l,\ldots,0)\in \mathcal{H}$, where $e_l\in \mathbb{F}^k$ is the $l$th standard basis (column) vector, $l=1,\ldots, k$. As $h$ has an SBR, then by Lemma \ref{lem:SBRhSBRProofReducesToDiagonals} and Theorem \ref{thm:MainResultSBRChar2ScalarCase}, each $h_l$ can be written as a finite sum of terms of the form $z_i \lambda_{lij} r_{lij}^2$ (setting $z_0:=1)$ for some $\lambda_{lij}\in \mathbb{F}, r_{lij}\in \mathbb{F}(z)$, hence by linearity we may reduce the task to showing that $\text{diag}(0,0,\ldots,z_i r_{lij}^2,\ldots 0)=z_i r_{lij}^2e_le_l^T=(r_{lij}e_le_l^T) z_iI_k (r_{lij}e_le_l^T)^T\in\mathcal{H}$; however, this follows from ampleness of $\mathcal{H}$.
\end{proof}

A new result that follows immediately from all this and \cite[Lemma 5.2]{Elsinger:BRS:2026} will be worth noting as well.
\begin{theorem}\label{thm:JordanAlghSBRs}
    Consider the associative algebra $\mathcal{A}_0=\mathbb{F}(z)_0^{k\times k}$ over a field $\mathbb{F}$ of characteristic $2$ with matrix transposition $T$ acting as the involution, giving rise to the unital quadratic Jordan algebra $\mathcal{A}_0^+$ (i.e., the same operations as $\mathcal{A}^+$, but restricted to elements from $\mathcal{A}_0$ instead of $\mathcal{A}$). Define the Jordan subalgebras, 
    \begin{gather*}
        H(\mathcal{A}_0,T)=\{F(z)\in \mathbb{F}(z)_0^{k\times k}:F(z)^T=F(z)\},\\
        H_{hSBR}(\mathcal{A}_0,T)=\{F(z/z_n):F(z)\in \mathbb{F}(z)^{k\times k} \text{ has an hSBR}\}.
    \end{gather*}
    Then $H_{hSBR}(\mathcal{A}_0,T)$ is an ample subspace of $H(\mathcal{A}_0,T)$, in fact, it is the ample hull of $\{(z_1/z_n)I_k,\ldots, (z_{n-1}/z_n)I_k,I_k\}\subseteq H(\mathcal{A}_0,T)$. Moreover,
    \begin{gather*}
        z_n H_{hSBR}(\mathcal{A}_0,T)=\{F(z)\in \mathbb{F}(z)^{k\times k}:F(z) \text{ has an hSBR}\}.
    \end{gather*}
\end{theorem}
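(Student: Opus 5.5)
The plan is to reduce everything to the scalar criterion for hSBRs in characteristic $2$ by dehomogenizing. First observe that if $F(z)\in\mathbb{F}(z)_1^{k\times k}$ then substituting $z/z_n$ (so that the last argument becomes $1$) gives $F(z/z_n)=z_n^{-1}F(z)\in\mathbb{F}(z)_0^{k\times k}$ by homogeneity of degree one. Since every matrix with an hSBR lies in $\mathbb{F}(z)_1^{k\times k}$ by Theorem \ref{thm:BessmertnyiRealizabilityThm}(b), this gives $H_{hSBR}(\mathcal{A}_0,T)=z_n^{-1}\{F:F\text{ has an hSBR}\}$. The ``moreover'' statement follows immediately, and $H_{hSBR}(\mathcal{A}_0,T)\subseteq H(\mathcal{A}_0,T)$.

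Next I will record the working criterion, valid for every $n$ when $\operatorname{char}(\mathbb{F})=2$:
\begin{gather*}
G(z) \text{ has an hSBR} \iff G\in\mathbb{F}(z)_1^{k\times k},\ G^T=G, \text{ and } G_{ll}\in z_1\mathbb{F}(z^2)_0+\cdots+z_n\mathbb{F}(z^2)_0 \text{ for all } l.
\end{gather*}
The argument has two parts.
\begin{itemize}
\item[(a)] For $n\geq3$, this is Theorem \ref{thm:BessmertnyiRealizabilityThm}(d)(iii) combined with \eqref{AltReprhSBRSpanOfSquares}.
\item[(b)] For $n=1,2$, Theorem \ref{thm:BessmertnyiRealizabilityThm}(d)(i) says every symmetric degree-one matrix has an hSBR. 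Then Lemma \ref{lem:SBRhSBRProofReducesToDiagonals} and Theorem \ref{thm:MainResultSBRChar2ScalarCase}(ii) show that the diagonal condition holds automatically, so the criterion is still an equivalence.
\end{itemize}

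With the criterion in hand, ampleness is checked entrywise using characteristic $2$. Closure under sums and $\mathbb{F}$-multiples is clear from the criterion, or from operations (I)--(II). For a trace $x+x^T$ with $x\in\mathcal{A}_0$, the matrix $z_n(x+x^T)$ is symmetric, homogeneous of degree one, and has zero diagonal, so it qualifies. For a norm $xx^T$, the $l$th diagonal entry of $z_nxx^T$ is $z_n\sum_i x_{li}^2\in z_n\mathbb{F}(z^2)_0$, since $x_{li}\in\mathbb{F}(z)_0$.

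For $h\in H_{hSBR}(\mathcal{A}_0,T)$ and $x\in\mathcal{A}_0$, the cross terms cancel because $h$ is symmetric and $2=0$, so
\begin{gather*}
(xhx^T)_{ll}=\sum_{i,j}x_{li}h_{ij}x_{lj}=\sum_i x_{li}^2h_{ii}.
\end{gather*}
Hence $z_n(xhx^T)_{ll}=\sum_i x_{li}^2(z_nh_{ii})$. Each $z_nh_{ii}$ lies in $z_1\mathbb{F}(z^2)_0+\cdots+z_n\mathbb{F}(z^2)_0$, and this space is closed under multiplication by $\mathbb{F}(z^2)_0$, so the criterion applies again. (If one prefers to avoid the criterion here, operations (III) and (IV)(b) with $X=z_nI_k$ give traces and norms directly, but I expect the entrywise computation to be cleaner.)

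For the ample hull, first note that $S=\{(z_1/z_n)I_k,\ldots,(z_{n-1}/z_n)I_k,I_k\}\subseteq H_{hSBR}(\mathcal{A}_0,T)$, since $z_iI_k$ has an hSBR. Now let $\mathcal{H}$ be any ample subspace containing $S$ and take $h\in H_{hSBR}(\mathcal{A}_0,T)$. As in the proof of Theorem \ref{thm:JordanAlgSBRs}, $h-\operatorname{diag}(h)$ is a trace (take $x$ to be the strictly upper triangular part), so it suffices to show $h_le_le_l^T\in\mathcal{H}$ for each $l$. By the criterion, $z_nh_l=\sum_{i,j}\lambda_{lij}z_ir_{lij}^2$ with $\lambda_{lij}\in\mathbb{F}$ and $r_{lij}\in\mathbb{F}(z)_0$, using the span description \eqref{AltReprhSBRSpanOfSquares}. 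Therefore
\begin{gather*}
h_le_le_l^T=\sum_{i,j}\lambda_{lij}\,(r_{lij}e_le_l^T)\,\big((z_i/z_n)I_k\big)\,(r_{lij}e_le_l^T)^T,
\end{gather*}
which lies in $\mathcal{H}$ by ampleness, since each $r_{lij}e_le_l^T\in\mathcal{A}_0$.

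The main obstacle I anticipate is bookkeeping rather than substance. One point is making sure the scalar criterion is genuinely available in the small cases $n=1,2$, as in (b) above. The other is checking that all the multipliers ($x_{li}$, $r_{lij}$) really lie in $\mathbb{F}(z)_0$, so that degree-one homogeneity is preserved throughout.
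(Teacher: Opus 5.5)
Your reduction $H_{hSBR}(\mathcal{A}_0,T)=z_n^{-1}\{F:F\text{ has an hSBR}\}$ and your ample-hull argument are fine. The latter is exactly the paper's proof of Theorem~\ref{thm:JordanAlgSBRs} with $\mathbb{F}(z)$ replaced by $\mathbb{F}(z)_0$. It needs only the necessity direction, Theorem~\ref{thm:MainResultSBRChar2ScalarCase}(ii) together with Lemma~\ref{lem:SBRhSBRProofReducesToDiagonals}.

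The problem is the ampleness half. For $n\geq 3$ the ``if'' direction of your criterion is Theorem~\ref{thm:BessmertnyiRealizabilityThm}(d)(iii), and you use exactly that to conclude that $z_n(x+x^T)$, $z_nxx^T$ and $z_nxhx^T$ have hSBRs. In this paper, however, that sufficiency direction is \emph{deduced from} the statement you are proving. Section~\ref{sec:CompletingPrfMainThm} obtains $\operatorname{span}\{z_1q_1^2+\cdots+z_nq_n^2:q_i\in\mathbb{F}(z)_0\}\subseteq\{F:F\text{ has an hSBR}\}$ straight from Theorem~\ref{thm:JordanAlghSBRs}. So your argument avoids circularity only by importing (d)(iii) from the original algorithmic proof in \cite{Elsinger:BRS:2026}, which defeats the purpose this theorem serves here. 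Your parenthetical fallback via (III) and (IV)(b) handles traces and norms. It does not handle the closure $xH_{hSBR}(\mathcal{A}_0,T)x^T\subseteq H_{hSBR}(\mathcal{A}_0,T)$, which still runs through the criterion. The paper avoids the issue by taking ampleness from the Schur-complement arguments behind \cite[Lemma 5.2]{Elsinger:BRS:2026}, just as ampleness of $H_{SBR}(\mathcal{A},T)$ is taken from \cite[Theorem 4.2]{Elsinger:BRS:2026}.

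The repair is short: prove the sufficiency direction directly. By Lemma~\ref{lem:SBRhSBRProofReducesToDiagonals}, \eqref{AltReprhSBRSpanOfSquares} and operations (I)--(II), it suffices that $z_iq^2$ has an hSBR for each $0\neq q\in\mathbb{F}(z)_0$. Since $z_iq\in\mathbb{F}(z)_1$, statement (b) of Theorem~\ref{thm:BessmertnyiRealizabilityThm} gives $z_iq=A/A_{22}$ with $A$ an hLP; (b) is already available from Section~\ref{sec:SchurComplAlgebra}. Then $A^T/A_{22}^T=(z_iq)^T=z_iq$. Operation (IV)(b) with the invertible hsLP $X=[z_i]$ shows that $(z_iq)\,z_i^{-1}\,(z_iq)=z_iq^2$ has an hSBR.

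With that in place your proof is complete, and it differs from the paper's in the ampleness step. You do not need the general closure of hSBRs under $h\mapsto xhx^T$ from the earlier paper. You only need (I), (II), (IV)(b), the necessity direction from Theorem~\ref{thm:MainResultSBRChar2ScalarCase}(ii), and the characteristic-$2$ identity $(xhx^T)_{ll}=\sum_i x_{li}^2h_{ii}$. In this way the scalar criterion does the work in place of that closure result.
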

\begin{proof}
    Using the arguments presented in proving \cite[Lemma 5.2]{Elsinger:BRS:2026} and following the proof of Theorem \ref{thm:JordanAlgSBRs} above, but replacing the field $\mathbb{F}(z)$ by $\mathbb{F}(z)_0$, proves immediately the theorem.
\end{proof} 

\section{Proof of Theorem \ref{thm:BessmertnyiRealizabilityThm}}\label{sec:CompletingPrfMainThm}
We are now ready to complete the proof of Theorem \ref{thm:BessmertnyiRealizabilityThm}. From the results of Sec.\ \ref{sec:SchurComplAlgebra}, it remains only to prove statements (c)(iii) and (d)(iii) of Theorem \ref{thm:BessmertnyiRealizabilityThm}. Now by Lemma \ref{lem:SBRhSBRProofReducesToDiagonals} we need only prove these statements when $\mathbb{F}$ is a field with $\operatorname{char}(\mathbb{F})=2$ and $k=1$. By Theorem \ref{thm:JordanAlgSBRs} it follows immediately that $\operatorname{span}\{q_0^2+z_1q_1^2+\cdots+z_nq_n^2:q_i\in \mathbb{F}(z)\}\subseteq \{F(z)\in \mathbb{F}(z)^{1\times 1}:F(z)\text{ has an SBR}\}$. Conversely, by Theorem \ref{thm:MainResultSBRChar2ScalarCase} we have $\{F(z)\in \mathbb{F}(z)^{1\times 1}:F(z)\text{ has an SBR}\}\subseteq \operatorname{span}\{q_0^2+z_1q_1^2+\cdots+z_nq_n^2:q_i\in \mathbb{F}(z)\}$. This proves statement (c)(iii) of Theorem \ref{thm:BessmertnyiRealizabilityThm}. Similarly, by Theorem \ref{thm:JordanAlghSBRs} it follows immediately that $\operatorname{span}\{z_1q_1^2+\cdots+z_nq_n^2:q_i\in \mathbb{F}(z)_0\}\subseteq \{F(z)\in\mathbb{F}(z)^{1\times 1}:F(z)\text{ has an hSBR}\}$. Conversely, by Theorem \ref{thm:MainResultSBRChar2ScalarCase} we have $\{F(z)\in \mathbb{F}(z)^{1\times 1}:F(z)\text{ has an hSBR}\}\subseteq \operatorname{span}\{z_1q_1^2+\cdots+z_nq_n^2:q_i\in \mathbb{F}(z)_0\}$. This proves statement (d)(iii) of Theorem \ref{thm:BessmertnyiRealizabilityThm}. This completes the proof of Theorem \ref{thm:BessmertnyiRealizabilityThm}.

\section{Proof of Theorem \ref{thm:MainResultSBRFieldExtension}}\label{sec:PrfThmSBRFieldExt}
In this section we prove Theorem \ref{thm:MainResultSBRFieldExtension}. Let $\mathbb{F}$ be a field, $\mathbb{K}$ be a field extension of $\mathbb{F}$, and $z_1,\ldots, z_n, \dots, z_{N}$ be indeterminates with $N\geq n$. In the case $\operatorname{char}(\mathbb{F})\not=2$, it follows immediately by Theorem \ref{thm:BessmertnyiRealizabilityThm} that Theorem \ref{thm:MainResultSBRFieldExtension} is true. Hence assume that $\operatorname{char}(\mathbb{F})=2$. Suppose that $F(z)\in\mathbb{F}(z)^{k\times k}$ [$z=(z_1,\ldots, z_n)$]. Then it follows by Theorem \ref{thm:BessmertnyiRealizabilityThm} that if $F(z)$ has a BR or hBR over the field $\mathbb{K}(w)$, where $w=(z_1,\dots, z_N)$, then it has a BR or hBR, respectively, over the field $\mathbb{F}(z)$.  Hence, suppose that $F(z)$ has a SBR or hSBR over the field $\mathbb{K}(w)$. Then by Lemma \ref{lem:SBRhSBRProofReducesToDiagonals} it suffices to prove Theorem \ref{thm:MainResultSBRFieldExtension} assuming $k=1$.
Suppose $F(z)\in \mathbb{F}(z)^{1\times 1}$ has an SBR (resp., hSBR) over the field $\mathbb{K}(w)$. Then by Theorem \ref{thm:MainResultSBRChar2ScalarCase} we have $F(z)\in \mathbb{K}(w^2)+w_1\mathbb{K}(w^2)+\cdots+w_N\mathbb{K}(w^2)$ [resp., $F(z)\in w_1\mathbb{K}(w^2)_0+\cdots+w_N\mathbb{K}(w^2)_0$] and therefore by Corollary \ref{cor:MainCor} we have $F(z)\in \mathbb{F}(z^2)+z_1\mathbb{F}(z^2)+\cdots+z_n\mathbb{F}(z^2)$ [resp., $F(z)\in z_1\mathbb{F}(z^2)_0+\cdots+z_n\mathbb{F}(z^2)_0$] and hence by Theorem \ref{thm:JordanAlgSBRs} (resp., Theorem \ref{thm:JordanAlghSBRs}) it follows that $F(z)$ has an SBR (resp., hSBR) over the field $\mathbb{F}(z)$. This completes the proof of Theorem \ref{thm:MainResultSBRFieldExtension}.

\section*{Acknowledgement}
AW is grateful to the National Science Foundation for support through grant DMS-2410678 and to the Simons Foundation for the support through grant MPS-TSM-00002799. 

\section*{Appendix: Auxiliary Proofs and Discussions}\label{sec:AppendixAuxPrfs}

Let $\mathbb{F}$ be a field with $\operatorname{char}(\mathbb{F})=2$ and $z_1,\ldots, z_n$ be $n$ indeterminates. For each $n$-tuple $\alpha=(\alpha_1,\ldots, \alpha_n)\in (\mathbb{N}\cup\{0\})^n$, define the monomial $z^{\alpha}=\prod_{i=1}^n z_i^{\alpha_i}\in \mathbb{F}(z)$ [where $z=(z_1,\ldots, z_n)$]. 

We begin by proving (following the proof of Prop.\ 2.2 in \cite{Santos:2023:PCD}) that $\mathbb{F}(z)$ is a $2^n$-dimensional vector space over $\mathbb{F}(z^2)$. The result is immediate once we prove the claim: the monomials of the form $\prod _{i=1}^nz_i^{\alpha_i}$ with $\alpha_i\in\{0,1\}$ are a basis for the vector space $\mathbb{F}(z)$ over the field $\mathbb{F}(z^2)$. First, it is clear that these monomials are linearly independent over the field $\mathbb{F}(z^2)$. Second, let $p(z)/q(z)\in \mathbb{F}(z)$ with $p(z),q(z)\in \mathbb{F}[z]$ and $q(z)\not= 0$. Then 
\begin{gather*}
    \frac{p(z)}{q(z)}=\frac{1}{q(z)^2}p(z)q(z)
\end{gather*}
Next, we can write $p(z)q(z)=\sum_{\alpha\in (\mathbb{N}\cup\{0\})^n}c_{\alpha}z^{\alpha}$ for some scalars $c_{\alpha}\in \mathbb{F}$ that are zero for all but a finite number of $\alpha$, hence we have $\alpha_i=2m_i+\beta_i$ for some $m_i\in \mathbb{N}\cup\{0\},\beta_i\in \{0,1\}$ and so
\begin{gather*}
    p(z)q(z)=\sum_{\alpha\in (\mathbb{N}\cup\{0\})^n}c_{\alpha}z^{\alpha}=\sum_{\alpha\in (\mathbb{N}\cup\{0\})^n}c_{\alpha}\left(\prod _{i=1}^nz_i^{m_i}\right)^2\prod _{i=1}^nz_i^{\beta_i}.
\end{gather*}
It follows from this and $\frac{1}{q(z)^2}\in \mathbb{F}(z^2)$ that $p(z)/q(z)$ is a linear combination, with coefficients in the field $\mathbb{F}(z^2)$, of the set of monomials $\{\prod _{i=1}^nz_i^{\alpha_i}:\alpha_i\in \{0,1\}, i=1,\ldots, n\}$. This proves the claim.

Next, we will prove that $\mathbb{F}(z)_1$ is a $2^{n-1}$-dimensional vector space over $\mathbb{F}(z^2)_0$. We begin by finding a basis of $\mathbb{F}(z)_0$ over $\mathbb{F}(z^2)_0$. First, if $r(z)\in \mathbb{F}(z)_0$ then by definition it is a homogeneous degree-$0$ rational function so that $r(z)=r(z_1/z_n,\ldots, z_n/z_n)$. It follows that the function $\varphi:\mathbb{F}(z)_0\rightarrow \mathbb{F}(y)$ [where $y=(y_1,\ldots, y_{n-1})$ are $n-1$ indeterminates] defined by $\varphi(r(z))=r(y_1,\ldots, y_{n-1},1)$ for $r(z)\in \mathbb{F}(z)_0$, is a field isomorphism. In particular, $\varphi (z_i/z_n)=y_i$ for $i=1,\ldots, n$ (setting $y_n:=1)$. As $\varphi$ sends $\mathbb{F}(z)_0$ and $\mathbb{F}(z^2)_0$ to $\mathbb{F}(y)$ and $\mathbb{F}(y^2)$, respectively, then the dimension of $\mathbb{F}(z)_0$ over $\mathbb{F}(z^2)_0$ is the same as that of $\mathbb{F}(y)$ over $\mathbb{F}(y^2)$, which by our proof above is precisely $2^{n-1}$ in which a basis for $\mathbb{F}(y)$ over $\mathbb{F}(y^2)$ is given by the monomials $\{\prod _{i=1}^{n-1}y_i^{\alpha_i}:\alpha_i\in \{0,1\}, i=1,\ldots, n-1\}$ (in the case $n=1$ it is just $\{1\}$) and therefore a basis for $\mathbb{F}(z)_0$ over $\mathbb{F}(z^2)_0$ is the preimage under $\varphi$ of those monomials which is $\beta:=\varphi^{-1}(\{\prod _{i=1}^{n-1}y_i^{\alpha_i}:\alpha_i\in \{0,1\}, i=1,\ldots, n-1\})=\{z_n^{-\sum_{i=1}^{n-1}\alpha_i}\prod _{i=1}^{n-1}z_i^{\alpha_i}:\alpha_i\in \{0,1\}, i=1,\ldots, n-1\}$ (in the case $n=1$ it is just $\{1\}$). Next, to prove that $\mathbb{F}(z)_1$ is a $2^{n-1}$-dimensional vector space over $\mathbb{F}(z^2)_0$ it suffices to show that $z_n\beta=\{z_n^{1-\sum_{i=1}^{n-1}\alpha_i}\prod _{i=1}^{n-1}z_i^{\alpha_i}:\alpha_i\in \{0,1\}, i=1,\ldots, n-1\}$ (in the case $n=1$ it is just $\{z_n\}$) is a basis for $\mathbb{F}(z)_1$ over $\mathbb{F}(z^2)_0$. First, it is clear that $z_n\beta\subseteq \mathbb{F}(z)_1$ and is linearly independent over $\mathbb{F}(z^2)_0$ since $\beta$ is. Next, let $r(z)\in \mathbb{F}(z)_1$. Then $r(z)/z_n\in \mathbb{F}(z)_0$ which implies it is a linear combination of the basis $\beta$ with coefficients in $\mathbb{F}(z^2)_0$ and hence it follows that $r(z)$ is a linear combination of elements from $z_n\beta$ with coefficients in $\mathbb{F}(z^2)_0$. This proves the claim which completes the proof.

Finally, we want to conclude this appendix by an expanded discussion on some technicalities regarding Theorem \ref{thm:MainResultSBRFieldExtension}.  Let $\mathbb{K}$ be a field extension of $\mathbb{F}$. We will address how the differential field structure of $\mathbb{F}$, inherited from the natural inclusion $\mathbb{F}\subseteq \mathbb{F}(z)$, lifts to that of $\mathbb{K}$, thereby making it the field of constants of the (extended) derivations $\partial/\partial z_1,\ldots,\partial/\partial z_n$ and allowing us to use Prop.\  \ref{prop:FieldOFConstantsForPartialDerivatives}.  We begin by constructing the algebraic closure of $\mathbb{F}$ within $\mathbb{K}$, say $\bar{\mathbb{F}}$. Let $\alpha \in \bar{\mathbb{F}}$. If the minimal polynomial of $\alpha$, say $m_{\alpha}\in \mathbb{F}[x]$ is separable, observe that
$\partial m_{\alpha} (\alpha)/\partial z_i=0\Rightarrow \partial \alpha/\partial z_i \times m'_{\alpha}(\alpha)=0.$
Since $m'_{\alpha}(\alpha)\neq 0$ due to separability one necessarily has $\partial \alpha/\partial z_i=0$. Thus, we can canonically extend the derivations on $\mathbb{F}$ to all separable elements over $\mathbb{F}$ by simply expanding the field of constants. On the other hand, if $m_\alpha$ is inseparable over $\mathbb{F}$, by replacing $\mathbb{F}$ with a superfield containing all separable elements over $\mathbb{F}$ if necessary and abuse of notation, we can assume that $\alpha$ is purely inseparable over $\mathbb{F}$. In this case, there is unfortunately no canonical extension of the derivations, however, for convenience, we choose to define  $\partial \alpha/\partial z_i:=0$ for all such elements and thus, $\bar{\mathbb{F}}$ becomes the field of constants of the extended derivations. Finally, since $\mathbb{K}/\bar{\mathbb{F}}$ is generated by purely transcendental elements, we may set their derivatives to $0$, thus settling our claim. As an example, one may consider $\mathbb{F}:=\mathbb{Q}\subseteq \mathbb{Q}(z)$ and $\mathbb{K}:=\mathbb{Q}(\sqrt{2}, t)\subseteq \mathbb{Q}(\sqrt{2},t)(z,w)$ where $t$ is an indeterminate. We extend $d/dz$ to $\mathbb{K}(z,w)$ such that $\mathbb{K}$ is the field of constants of $d/dz$ by declaring that $d\sqrt{2}/dz=dt/dz=0$.

\printbibliography

@article{Albert:1938:SAM,
 author = {A. Adrian Albert},
 journal = {Transactions of the American Mathematical Society},
 number = {3},
 pages = {386--436},
 publisher = {American Mathematical Society},
 title = {Symmetric and Alternate Matrices in An Arbitrary Field, {I}},
 volume = {43},
 year = {1938},
 doi = {10.2307/1990068}
}

@book{Cox:2015:IVA,
  title={Ideals, Varieties, and Algorithms: An Introduction to Computational Algebraic Geometry and Commutative Algebra},
  author={Cox, D.A. and Little, J. and O'Shea, D.},
  series={Undergraduate Texts in Mathematics},
  year={2015},
  publisher={Springer},
  edition = {4th Edition}
}

@book{Dummit:2004:AAL,
  title={Abstract Algebra},
  author={Dummit, D.S. and Foote, R.M.},
  year={2004},
  edition={3rd Edition},
  publisher={Wiley}
}

@article{Elsinger:BRS:2026,
title = {Bessmertnyĭ realizations of symmetric multivariate rational matrix functions over any field},
journal = {Linear Algebra and its Applications},
volume = {737},
pages = {80-118},
year = {2026},
doi = {10.1016/j.laa.2026.02.014},
author = {Jason Elsinger and Ian Orzel and Aaron Welters},
}

@book{Kaplansky:1957:IDA,
  title={An Introduction to Differential Algebra},
  author={Kaplansky, I.},
  series={Actualit{\'e}s scientifiques et industrielles},
  year={1957},
  publisher={Hermann}
}

@book{Kolchin:1973:DAA,
  title={Differential Algebra and Algebraic Groups},
  author={Kolchin, E.R.},
  series={Pure and Applied Mathematics},
  year={1973},
  publisher={Academic Press}
}

@article{McCrimmon:1969:OHT,
  author  = {McCrimmon, Kevin},
  title   = {On Herstein's theorems relating Jordan and associative algebras},
  journal = {Journal of Algebra},
  volume  = {13},
  number  = {3},
  pages   = {382--392},
  year    = {1969},
  doi     = {10.1016/0021-8693(69)90081-7}
}

@book{McCrimmon:2010:TJA,
  author    = {McCrimmon, Kevin},
  title     = {A Taste of {J}ordan Algebras},
  series    = {Universitext},
  publisher = {Springer New York},
  address   = {New York, NY},
  year      = {2010}
}

@article{Nowicki:1988:ROC,
author = {Andrzej Nowicki and Masayoshi Nagata},
title = {{Rings of constants for $k$-derivations in $k[x_1, \ldots, x_n]$}},
volume = {28},
journal = {Journal of Mathematics of Kyoto University},
number = {1},
publisher = {Duke University Press},
pages = {111 -- 118},
year = {1988},
doi = {10.1215/kjm/1250520561}
}

@article{Nowicki:1994:RFC,
title = {Rings and fields of constants for derivations in characteristic zero},
journal = {Journal of Pure and Applied Algebra},
volume = {96},
number = {1},
pages = {47-55},
year = {1994},
doi = {10.1016/0022-4049(94)90086-8},
author = {Andrzej Nowicki}
}

@article{Okuda:2004:KOD,
author = {Shun-Ichiro Okuda},
title = {{Kernels of derivations in positive characteristic}},
volume = {34},
journal = {Hiroshima Mathematical Journal},
number = {1},
publisher = {Hiroshima University, Mathematics Program},
pages = {1 -- 19},
year = {2004},
doi = {10.32917/hmj/1150998069}
}

@article{Santos:2023:PCD,
title = {Positive characteristic {D}arboux-{J}ouanolou integrability of differential forms},
journal = {Journal of Pure and Applied Algebra},
volume = {227},
number = {2},
pages = {107195:1--15},
year = {2023},
doi = {10.1016/j.jpaa.2022.107195},
author = {Edileno de Almeida Santos and Sergio Rodrigues}
}

\end{document}